\numberwithin{equation}{section}
\newtheorem{prop}{Proposition}[section]
\newtheorem{lem}[prop]{Lemma}
\newtheorem{thm}[prop]{Theorem}
\newtheorem{cor}[prop]{Corollary}
\theoremstyle{remark}
\newtheorem{rem}[prop]{Remark}
\theoremstyle{definition}
\newtheorem{Def}[prop]{Definition}
\newtheorem{exa}[prop]{Example}
\newcommand{\rrlc}{rrl-continuation\xspace}
\newcommand{\rrlci}{\textit{rrl-continuation}\xspace}
\newcommand{\rrlcs}{rrl-continuations\xspace}
\newcommand{\rrlce}{rrl-continuable\xspace}
\newcommand{\rrlcy}{rrl-continuability\xspace}
\newcommand{\beglabel}[1]{\begin{equation}	\label{#1}}
\newcommand{\elabel}{\end{equation}}
\newcommand{\eps}{\varepsilon}     
\newcommand{\defeq}{\coloneqq} 
\newcommand{\C}{\mathbb{C}}       
\newcommand{\N}{\mathbb{N}}  
\newcommand{\Q}{\mathbb{Q}}       
\newcommand{\R}{\mathbb{R}}       
\newcommand{\Z}{\mathbb{Z}}       
\newcommand{\B}{\mathbb{B}}
\newcommand{\D}{\mathbb{D}}
\newcommand{\E}{\mathbb{E}}
\renewcommand{\S}{\mathbb{S}}
\newcommand{\T}{\mathbb{T}}
\newcommand{\PP}{\widehat{\mathbb{C}}}
\newcommand{\ph}{\varphi}          
\newcommand{\be}{\beta}
\newcommand{\de}{\delta}
\newcommand{\De}{\Delta}
\newcommand{\ga}{\gamma}
\newcommand{\Ga}{\Gamma}
\newcommand{\la}{\lambda}
\newcommand{\La}{\Lambda}
\renewcommand{\th}{\theta}
\newcommand{\Sig}{\Sigma}
\newcommand{\om}{\omega}
\newcommand{\cN}{\mathcal{N}}
\newcommand{\gC}{\mathscr C}       
\newcommand{\gH}{\mathscr H}       
\newcommand{\gO}{\mathscr O}
\newcommand{\gM}{\mathscr M}
\newcommand{\col}{\colon}          
\newcommand{\I}{{\mathrm i}}
\newcommand{\dd}{{\mathrm d}}      
\newcommand{\ee}{{\mathrm e}}      
\newcommand{\pa}{\partial}
\newcommand{\ii}{^{-1}}
\newcommand{\demi}{\frac{1}{2}}
\newcommand{\ti}{\tilde}
\newcommand{\ens}{\enspace}
\newcommand{\ie}{\emph{i.e.}\@\xspace}
\newcommand{\eg}{\emph{e.g.}\@\xspace}
\newcommand{\rhs}{{right-hand side}\xspace}
\newcommand{\suppar}[1]{^{(#1)}}
\DeclareMathOperator{\dist}{dist}     
\newcommand{\dst}{\displaystyle}
\newcommand{\wrt}{with respect to}
\newcommand{\ov}{\overline}
\DeclarePairedDelimiter\abs{\lvert}{\rvert}%
\DeclarePairedDelimiter\norm{\lVert}{\rVert}%
\let\oldabs\abs
\def\abs{\@ifstar{\oldabs}{\oldabs*}}
\let\oldnorm\norm
\def\norm{\@ifstar{\oldnorm}{\oldnorm*}}
\newcommand{\IN}{^{\textnormal{(i)}}}
\newcommand{\EX}{^{\textnormal{(e)}}}
\newcommand{\un}[1]{{\underline{#1}}}
\DeclareMathOperator{\supp}{\rule[-.3\baselineskip]{0pt}{.8\baselineskip}supp}
\newcommand{\elSC}{\ell^1(\S,\C)}
\newcommand{\elSB}{\ell^1(\S,B)}
\newcommand{\dlSB}{\ell^{1/2}(\S,B)}
\newcommand{\qlSB}{\ell^{1/4}(\S,B)}
\newcommand{\enSB}{\ell^\nu(\S,B)}
\newcommand{\ministrut}{\rule[-.25\baselineskip]{0pt}{.1\baselineskip}}
\newcommand{\xHecke}{x^*_{\textrm H}}
\newcommand{\gHecke}{g_{\textrm H}}
\newcommand{\gHe}[1]{g_{{\textrm H},#1}}
\begin{document}

\title{Generalised continuation by means of right limits}

\author{David Sauzin and Giulio Tiozzo}


\maketitle

\vspace{-.75cm}

\begin{abstract} 
Several theories have been proposed to generalise the concept of analytic continuation
to holomorphic functions of the disc for which the circle is a natural boundary.
Elaborating on Breuer-Simon's work on \emph{right limits} of power series,
Baladi-Marmi-Sauzin recently introduced the notion of \emph{renascent right
limit} and \rrlci. 

We discuss a few examples and consider particularly the classical example of
\emph{Poincar\'e simple pole series} in this light. These functions are
represented in the disc as series of infinitely many simple poles located on the
circle;
they appear for instance in small divisor problems in dynamics. 
We prove that any such function admits a unique \rrlc, which coincides with the
function obtained outside the disc by summing the simple pole expansion.
We also discuss the relation with monogenic regularity in the sense of
Borel.
\end{abstract}


\setcounter{tocdepth}{2}
\tableofcontents

\newpage

\section{Introduction}

When one is given a function~$g$ holomorphic in the unit disc~$\D$,
one can ask whether~$g$ is related in some way to a holomorphic function
defined outside the disc.
A first answer to the question comes from Weierstrass's notion of analytic
continuation.  Given a point~$\la$ on the unit circle, if there exists a
neighbourhood~$V$ and a holomorphic function on~$V$ whose restriction to $V \cap
\D$ is~$g$, then we say that $\la$ is a \emph{regular point} and the restriction
of~$g$ to the outer part of~$V$ is an analytic continuation.
%
%
%
If there is no regular point on the unit circle, then we say that the unit
circle is a \emph{natural boundary} for~$g$, but is it the end of the story?

It is the purpose of ``generalised analytic continuation'' to investigate this
situation and suggest other ways in which an outer function can be related to
the inner function~$g$.
The reader is referred to the monograph \cite{RS} for a panorama of various
theories which have been proposed to go beyond Weierstrass's point of view on
analytic continuation.

In this paper, we shall explore a new type 
of generalised analytic
continuation, called \rrlci, which is based on the notion 
of \emph{right limits} introduced in \cite{BrSi} as a
tool unifying various classical criteria to detect a natural boundary (an earlier, related
approach is due to \cite{Ag}).
The notion of \rrlc 
was put forward in the recent article
\cite{BMS} to deal with the natural boundary of Ruelle's susceptibility function; 
the goal of this paper is to start developing a general theory 
of \rrlc, see how it applies to some classical cases and compare
it to other theories of analytic continuation.

%









We start by giving a few definitions and analysing 
some basic examples.
In particular, we shall define the class of \emph{rrl-continuable} functions 
as functions on the unit disc whose Taylor expansion at the origin admits a renascent right limit
 (see section \ref{S:rrlc}). Each renascent right limit determines a function on the complement of $\mathbb{D}$ 
 which we call an \rrlci of $g$.

Let now $g$ be an rrl-continuable function:
if $g$ is continuable outside~$\D$ in the traditional sense, then
its analytic continuation across any arc coincides with its \rrlc (and thus must be unique); 
on the other hand, if $g$ has more than one rrl-continuation,
then the unit circle must be a natural boundary for $g$ (Proposition \ref{proprrlalternative}).
Particularly interesting is thus the case of functions called \emph{uniquely rrl-continuable},
that admit a unique \rrlc: indeed, in this case $\mathbb{D}$
may or may not be a natural boundary, but the \rrlc 
is a canonically defined function outside the disc which we 
may think of as continuation of $g$.

In the present paper we construct uniquely rrl-continuable functions 
in several contexts and explore the relationship between the function and 
its rrl-continuation(s).

 





As we shall see, the notion of \rrlc is well suited to study 
 power series generated by dynamical systems; in particular, we  analyse in this light
Hecke's example $\gHecke(z) = \sum_{k\ge1} \{k\th\}
z^k$, where $\th\in \R\setminus\Q$ and $\{\,\cdot\,\}$ denotes the
fractional part function.
It was shown in \cite{BrSi} how the theory of right limits implies
that~$\gHecke$ has a natural boundary on the unit circle; we show that
it has a unique \rrlc given by
$-\sum_{n<0} \{n\th\} z^n = \gHecke(z\ii) + (1-z)\ii$ for $\abs{z}>1$.


In the second part of the paper, we apply the theory to the classical situation, first
considered by Poincar\'e in 1883, where $g(z)$ is defined for $\abs{z}<1$ as a
series of simple poles
\[ g(z) \defeq \sum_{n \ge 0} \frac{\rho_n}{z-\ee^{\I\th_n}} \]
where the points $\ee^{\I\th_n}$ are dense on the unit circle
and the nonzero complex numbers~$\rho_n$ form an absolutely
convergent series.
For such a function (which we call \emph{Poincar\'e simple pole
  series}, or \emph{PSP-series} for short) the unit circle is a
natural boundary in the classical sense; however, there is a natural
candidate for the outer function, namely the sum $h(z)$ of the simple
pole series for $\abs{z}>1$.
We will prove that every simple pole 
series $g(z)$ inside the disc has a unique \rrlc, which coincides with 
the outer function: 
\begin{thm}    \label{thmPSPuniquerrlce}	
Let $g\in\gO(\D)$ be an inner PSP-series. Then $g$ is uniquely \rrlce and 
its \rrlc is the associated outer PSP-series.
\end{thm}
This result was announced in \cite[Appendix~A.2]{BMS}.
%
%
As an unexpected byproduct, we obtain that, for any
$\th\in\R\setminus\Q$, the function $g(z) = \sum_{k\ge0}
\dist(k\th,\Z) z^k$, which is somewhat similar to Hecke's example, has
a natural boundary on the unit circle and a unique \rrlc. 
PSP-series also appear in a dynamical context as 
solutions 
to the cohomological equation
for small divisor problems \cite{MS1} (see section \ref{S:mon}).
In fact, in this case, we have to consider PSP-series
with values in an arbitrary complex Banach space;
the theory of rrl-continuation still makes sense, and 
 we shall prove a more general version 
of Theorem \ref{thmPSPuniquerrlce} for vector-valued PSP-series 
(Theorem \ref{thm_BWDuniqueRRL}).

%


We will also compare the concept of rrl-continuability with Borel's
concept of monogenic regularity as developed in \cite{MS1},
\cite{MS2}: in particular, we shall see (Theorem~\ref{thmmonog})
that a large class of PSP-series is monogenic, and their continuation
in the sense of monogenic functions coincides with the outer series.
This fact raises the question whether any monogenic function admits a
unique rrl-continuation.


\section{Continuation by renascent right limits}	\label{secremindrrlc}

\subsection{Preliminaries}

We are interested in holomorphic functions defined in the unit disc by
power series of the form
\beglabel{eqdefgfroma}
g(z) = \sum_{k=0}^\infty a_k z^k
\elabel
with a bounded sequence of complex coefficients $(a_k)_{k\ge0}$.
Our aim is to investigate the possibility of defining ``generalised analytic
continuations'' for $\abs{z}>1$ when the unit circle is a natural boundary.
We will accept what is called a strong natural boundary in \cite{BrSi}:
\begin{Def}
A function $g(z)$ holomorphic in the unit disc is said to have a
\emph{strong natural boundary on the unit circle} if,
for every nonempty interval $(\om_1,\om_2)$,
\begin{equation}	\label{eqdefstrgnatb}
\sup_{0<r<1} \int_{\om_1}^{\om_2} \abs{g(r \ee^{\I \om})}\, \dd\om =\infty.
\end{equation}
\end{Def}
Clearly, if the unit circle is a strong natural boundary for~$g$, then the unit
circle is a natural boundary in the usual sense, since the function is not even
bounded in any sector $\{r \ee^{\I \om}\mid r \in (0,1),\,\om\in (\om_1,\om_2)\}$.


The article \cite{BrSi} provides a remarkable criterium to detect strong natural
boundaries (Theorem~\ref{thmBrSi} below), based on the notion of right limit
that we now recall.

\begin{Def}
\label{D:RL}
\begin{enumerate}[(i)]
\item
Let $\un a = (a_k)_{k\ge0}$ be a sequence in a topological space~$E$.
A \emph{right limit of~$\un a$} is any two-sided sequence $\un b = (
b_n)_{n\in\Z}$ of~$E$ for which there exists a strictly increasing sequence of positive
integers $(k_j)_{j\ge1}$ such that
\beglabel{eqlimankj}
\lim_{j\to\infty} a_{n+k_j} = b_n
\quad \text{for every $n\in\Z$}.
\elabel
\item
Let $g$ be a holomorphic function of the unit disc. We say that $\un b$ is a
\emph{right limit of~$g$} if the sequence~$\un a$ formed by the Taylor
coefficients at the origin, $a_k \defeq g\suppar k(0)/k!$, is bounded and $\un
b$ is a right limit of~$\un a$.
\end{enumerate}
\end{Def}


In view of~\eqref{eqlimankj}, each~$b_n$ must be an accumulation point of~$\un
a$. 
When $E$ is a compact metric space, every sequence~$\un a$ admits right limits;
given $\ell\in\Z$ and $c$ accumulation point of~$\un a$, one can always find
a right limit~$\un b$ such that $b_\ell = c$
(see \eg \cite[Lemma~2.1]{BMS}).

In the case of a function~$g$ with bounded Taylor coefficients, each right
limit gives rise to two generating series which will play an important role when
investigating the boundary behaviour of~$g$:

\begin{Def}	\label{definnout}
Given a two-sided bounded sequence of complex numbers $\un b = ( b_n)_{n\in\Z}$,
we define the \emph{inner and outer functions associated with~$\un b$} as
\begin{align*}
g_{\un b}^+(z) &= \sum_{n\ge0} b_n z^n, &\hspace{-4em} z&\in \D, \\[1ex]
g_{\un b}^-(z) &= -\sum_{n<0} b_n z^n, &\hspace{-4em} z&\in \E,
\end{align*}
where $\D = \{\, z\in\C \mid |z|< 1 \,\}$ is the unit disc and 
$\E = \{\, z\in\C \mid |z|>1 \,\} \cup\{\infty\}$ is a disc centred at~$\infty$
in the Riemann sphere~$\PP$.
\end{Def}

\begin{Def}
Given an arc~$J$ of the unit circle, $\un b = ( b_n)_{n\in\Z}$ is said to be
\emph{reflectionless on~$J$} if $g_{\un b}^+$ has an analytical
continuation in a neighbourhood~$U$ of~$J$ in~$\C$ and this analytical
continuation coincides with~$g_{\un b}^-$ on $U\cap \E$.
\end{Def}
Note that this terminology stems
from the spectral theory of Jacobi matrices, it is not related to the
Schwarz reflection principle.


\begin{thm}[Breuer--Simon, \cite{BrSi}]	\label{thmBrSi} 
Let $g$ be holomorphic in~$\D$ with bounded Taylor coefficients at~$0$.
\begin{enumerate}[(i)]
\item
Consider a nonempty interval $(\om_1,\om_2)$ and the corresponding arc of the
unit circle
$J = \{\, \ee^{\I\om} \mid \om\in(\om_1,\om_2) \,\}$,
and assume that \eqref{eqdefstrgnatb} is violated. Then every right limit
of~$g$ is reflectionless on~$J$.
\item
If $\un b = ( b_n)_{n\in\Z}$ and $\ti{\un b} = ( \ti b_n)_{n\in\Z}$ are two distinct
right limits of~$g$ and if there exists $N\in\Z$ such that either 
$b_n = \ti b_n$ for all $n\ge N$
or $b_n = \ti b_n$ for all $n\le N$,
then the unit circle is a strong natural boundary for~$g$.
\end{enumerate}
\end{thm}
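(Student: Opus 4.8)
I would establish the two parts as follows.

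\emph{Part (i).} Fix a right limit $\un b=\{b_n\}_{n\in\Z}$ of $g$, realised along an increasing sequence $k_j\to\infty$, and set $A\defeq\sup_k\abs{a_k}$, so $\abs{b_n}\le A$ for all $n$. The first move is to reinterpret reflectionlessness via the analytic representation of distributions on the circle. As $\{b_n\}$ is bounded, the formal Laurent series $S\defeq\sum_{n\in\Z}b_n\ee^{\I n\om}$ defines a distribution on $\partial\D$, and \emph{by construction} its analytic representation is exactly the pair $(g_{\un b}^+,g_{\un b}^-)$ of Definition~\ref{definnout}: the jump of this pair across $\partial\D$ is $S$. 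Invoking the standard fact that such a distribution is supported in a closed set $K\subseteq\partial\D$ precisely when its analytic representation extends to a single holomorphic function on $\PP\setminus K$ (essentially a one-dimensional edge-of-the-wedge statement), we see that $\un b$ is reflectionless on an arc $J'$ \emph{if and only if} $S$ vanishes on $J'$. Since reflectionlessness on $J$ is the same as reflectionlessness on every compact subarc $J'\Subset J$, it suffices to show that $S$ vanishes on $J$.

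To that end, let $J$ be the arc attached to $(\om_1,\om_2)$, let $M\defeq\sup_{0<r<1}\int_{\om_1}^{\om_2}\abs{g(r\ee^{\I\om})}\,d\om<\infty$ (this is precisely the failure of \eqref{eqdefstrgnatb}), and take $\ph\in C^\infty_c(J)$. Since $\abs{a_{n+k_j}}\le A$ and $\widehat\ph$ decays rapidly, dominated convergence gives $\lg S,\ph\rg=\lim_j\sum_n a_{n+k_j}\,\widehat\ph(-n)$. Re-indexing by $m=n+k_j$ (the terms with $m<0$ drop out) and inserting a factor $r^m$, $r\uparrow1$, to justify interchanging sum and integral, the inner sum becomes $\lim_{r\uparrow1}\tfrac1{2\pi}\int_J\ph(\om)\,\ee^{-\I k_j\om}\,g(r\ee^{\I\om})\,d\om$, integrated over $J$ alone because $\supp\ph\subseteq J$. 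Now the one genuinely analytic step enters: $g$ is holomorphic in $\D$ with $\sup_{0<r<1}\int_J\abs{g(r\ee^{\I\om})}\,d\om<\infty$, so by classical Hardy--Smirnov theory --- work in a lens-shaped Jordan domain inside $\D$ abutting a compact subarc of $J$ that contains $\supp\ph$ (its endpoints chosen, by Fubini, so that $g$ is radially integrable there), and use that $E^1$-functions on such a domain have $L^1$ boundary values attained in $L^1$ --- $g$ has non-tangential boundary values $g^*\in L^1$ on $\supp\ph$ with $g(r\ee^{\I\cdot})\to g^*$ in $L^1$ on $\supp\ph$. Hence $\sum_n a_{n+k_j}\,\widehat\ph(-n)=\tfrac1{2\pi}\int_J\ph(\om)\,g^*(\ee^{\I\om})\,\ee^{-\I k_j\om}\,d\om\to0$ by the Riemann--Lebesgue lemma, $\ph g^*$ being a fixed $L^1$ function and $k_j\to\infty$. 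Thus $\lg S,\ph\rg=0$ for all $\ph\in C^\infty_c(J)$, so $S$ vanishes on $J$ and (i) is proved.

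\emph{Part (ii).} Suppose the unit circle is not a strong natural boundary. Then \eqref{eqdefstrgnatb} fails for some interval, so $\sup_{0<r<1}\int_J\abs{g(r\ee^{\I\om})}\,d\om<\infty$ for the corresponding arc $J$, and part (i) makes \emph{both} $\un b$ and $\ti{\un b}$ reflectionless on $J$. Choosing a connected open neighbourhood $U$ of $J$ small enough to serve both continuations, we obtain functions $\Psi,\ti\Psi$ holomorphic on the connected domain $\Om\defeq\D\cup U\cup\E\subseteq\PP$, with $\Psi=g_{\un b}^+$ on $\D$ and $\Psi=g_{\un b}^-$ on $\E$, and similarly $\ti\Psi$. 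By the identity theorem, $\Psi-\ti\Psi$ is holomorphic on $\Om$ and is represented near $0$ by $\sum_{n\ge0}(b_n-\ti b_n)z^n$ and near $\infty$ by $-\sum_{n<0}(b_n-\ti b_n)z^n$, the latter vanishing at $\infty$. If $b_n=\ti b_n$ for all $n\ge N$, the first series is a polynomial $p$; since $\Psi-\ti\Psi$ is holomorphic at $\infty$ and agrees with $p$ on $\Om\setminus\{\infty\}$, $p$ tends to $0$ at $\infty$, forcing $p\equiv0$, so $b_n=\ti b_n$ for all $n\ge0$; then the second series vanishes identically too, so $b_n=\ti b_n$ for all $n<0$ as well, and $\un b=\ti{\un b}$, contradicting distinctness. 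If instead $b_n=\ti b_n$ for all $n\le N$, the second series is a Laurent polynomial in $z^{-1}$; as $\Psi-\ti\Psi$ is holomorphic at $0\in\Om$ while such a Laurent polynomial has a pole at $0$ unless it vanishes, it vanishes, and one finishes as before. Either way (ii) follows.

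\emph{Where the difficulty lies.} Everything except one step is bookkeeping --- dominated convergence, the analytic-representation dictionary, the identity theorem on $\Om$. The hard part will be the analytic input in part (i): upgrading $\sup_{0<r<1}\int_J\abs{g(r\ee^{\I\om})}\,d\om<\infty$ to $L^1$ boundary values attained in $L^1$, i.e.\ excluding a singular boundary part of $g$ over $J$. This uses holomorphy essentially --- the statement is false for arbitrary sequences of bounded $L^1$ mean, and Riemann--Lebesgue would not annihilate the contribution against a singular measure --- and it is exactly this that separates a mild natural boundary, where right limits must be reflectionless, from a strong one.
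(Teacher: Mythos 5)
The paper itself offers no proof of Theorem~\ref{thmBrSi}: it is quoted from Breuer--Simon \cite{BrSi} and used as a black box, so there is nothing internal to compare your argument with; I can only judge it on its own terms, and on those terms it is essentially sound. Part (ii) is fine: assuming the circle is not a strong natural boundary, part (i) makes both right limits reflectionless on some arc $J$, and the identity theorem on $\D\cup U\cup\E$ together with the vanishing of the outer series at $\infty$ (resp.\ holomorphy of the glued function at $0$) forces $\un b=\ti{\un b}$, contradicting distinctness. In part (i), the distributional dictionary is legitimate --- you only need the direction ``$S$ vanishes on $J$ implies reflectionless'', which applies because boundedness of $\un b$ gives $g^{\pm}_{\un b}(z)=O\big((1-\abs{z})^{-1}\big)$ near the circle, so the standard gluing theorem for distributional boundary values of holomorphic functions is available --- and the Fourier bookkeeping (dominated convergence, re-indexing, Abel factor $r^m$, Riemann--Lebesgue with $k_j\to\infty$) is correct.

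The one step to tighten is the assertion that $g(r\ee^{\I\cdot})\to g^{*}$ \emph{in $L^1$ on $\supp\ph$}. Smirnov theory for your lens (or curvilinear rectangle) gives nontangential boundary values in $L^1$ of the boundary and $L^1$ convergence along the level curves of the conformal map, not directly along the circular arcs $\{\abs{z}=r\}$, which behave differently near the two corners. Two standard repairs: (a) on a compact subarc strictly inside the boundary arc the conformal map extends analytically across (Schwarz reflection), so the nontangential maximal function of $g$ is locally integrable there (transfer of the $H^1$ maximal theorem), and dominated convergence upgrades a.e.\ radial convergence to local $L^1$ convergence of $g_r$; or (b) bypass $L^1$ convergence altogether: by the $E^1$ Cauchy formula write $g$ on the lens as the Cauchy integral of $g^{*}$ over the boundary arc plus a function holomorphic across the open arc; the latter piece is handled by continuity plus Riemann--Lebesgue, while the pairing of the former with $\ph(\om)\ee^{-\I k_j\om}$ equals the pairing of $g^{*}$ with the analytic (Riesz) projection of $\ph\,\ee^{-\I k_j\cdot}$, which tends to $0$ uniformly because $\hat\ph$ is summable. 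With either repair the argument closes, and you have correctly located the crux: excluding a singular boundary contribution over $J$ is exactly where holomorphy, and not just the bounded $L^1$ means, must be used.
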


\subsection{The \rrlce functions} \label{S:rrlc}

In \cite{BMS}, motivated by Breuer--Simon's work, right limits were
used to define a type of generalised analytic continuation as follows:
\begin{Def}
	\label{defrrl}
\begin{enumerate}[(i)]
\item
A \emph{renascent right limit} of a sequence~$\un a$ in a topological space is
any right limit $\un b = (b_n)_{n\in\Z}$ of~$\un a$ such that $b_n=a_n$ for
all $n\ge 0$.
\item
An \emph{\rrlce function} is a holomorphic function~$g$
which admits a renascent right limit~$\un b$;
then $g_{\un b}^+ = g$ in~$\D$ and the function~$g_{\un b}^-$, which is
holomorphic in~$\E$ and vanishes at~$\infty$, is said to be an \emph{\rrlc of~$g$}.
\item An \rrlce function~$g$ is said to be \emph{uniquely \rrlce} if it has a
unique \rrlc; in the opposite case, it is said to be \emph{polygenous}.
\end{enumerate}
\end{Def}

As a consequence of Theorem \ref{thmBrSi}, the situation for rrl-continuable 
functions is simpler than for arbitrary functions:

%
\begin{prop}	\label{proprrlalternative}
Let $g$ be an \rrlce function. Then
\begin{enumerate}[(i)]
\item
either there is an arc of the unit circle through which $g$ admits analytic
continuation; then $g$ is uniquely \rrlce and all the analytic continuations
of~$g$ through arcs of the unit circle match and coincide with the
\rrlc of~$g$;
\item
or the unit circle is a strong natural boundary for~$g$.

If $g$ is polygenous, then the unit circle is a strong natural boundary for~$g$.
\end{enumerate}
\end{prop}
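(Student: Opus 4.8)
The plan is to reduce everything to the Breuer--Simon Theorem~\ref{thmBrSi}. Since $g$ is \rrlce, fix a renascent right limit $\un b=\{b_n\}_{n\in\Z}$ of its Taylor sequence $\un a=\{a_k\}_{k\in\N}$, $a_k\defeq g\suppar k(0)/k!$; then $g_{\un b}^+=g$ in~$\D$ and $g_{\un b}^-$ is the associated \rrlc. I would split on whether or not there exists a nonempty interval $(\om_1,\om_2)$ for which the condition~\eqref{eqdefstrgnatb} fails. If there is no such interval, then~\eqref{eqdefstrgnatb} holds for every nonempty interval, which is precisely the definition of a strong natural boundary on the unit circle, and the second alternative holds.

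So suppose~\eqref{eqdefstrgnatb} is violated for some nonempty $(\om_1,\om_2)$, with associated arc~$J$. Then Theorem~\ref{thmBrSi}(i) tells us that every right limit of~$g$ is reflectionless on~$J$; applying this to the renascent right limit~$\un b$ and unwinding the definition of ``reflectionless'', $g_{\un b}^+=g$ admits an analytic continuation across~$J$. This gives the first clause of alternative~(i). To get uniqueness, I would observe that the violation of~\eqref{eqdefstrgnatb} means the unit circle is \emph{not} a strong natural boundary for~$g$, and then feed any two renascent right limits $\un b,\ti{\un b}$ into the contrapositive of Theorem~\ref{thmBrSi}(ii): since $b_n=a_n=\ti b_n$ for all $n\ge0$, the hypothesis of~(ii) holds with $N=0$, hence $\un b=\ti{\un b}$. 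Thus $g$ has a single renascent right limit, so a single \rrlc, and $g$ is uniquely \rrlce.

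It remains to see that every analytic continuation of~$g$ through an arc coincides with $g_{\un b}^-$. Let $G$ be such a continuation, holomorphic on a neighbourhood~$U_1$ of an arc~$J_1$ with $G=g$ on $U_1\cap\D$; shrinking $U_1$, I may assume $U_1\cap\E$ is connected. A function holomorphic near a compact subarc of~$J_1$ is bounded there, so~\eqref{eqdefstrgnatb} fails for the interval defining a suitable closed subarc $K\subset J_1$; Theorem~\ref{thmBrSi}(i) applied on~$K$ then produces an analytic continuation of $g=g_{\un b}^+$ near~$K$ agreeing with $g_{\un b}^-$ on the outer side. By the identity theorem this continuation equals~$G$ near~$K$, so $G$ and $g_{\un b}^-$ agree on a nonempty open subset of $U_1\cap\E$; since $g_{\un b}^-$ is holomorphic on all of~$\E$, the identity theorem gives $G=g_{\un b}^-$ on $U_1\cap\E$. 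Hence all such continuations coincide with the (now unique) \rrlc, and in particular with one another, completing alternative~(i). Finally, if $g$ is polygenous it is not uniquely \rrlce, so by the previous paragraph~\eqref{eqdefstrgnatb} cannot fail for any interval; therefore it holds for every nonempty interval and the unit circle is a strong natural boundary for~$g$.

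The main obstacle is the last paragraph: one has to be careful when passing from ``$g$ continues analytically through an arc'' to ``\eqref{eqdefstrgnatb} fails on a slightly smaller arc'' (a compactness argument near the arc, splitting the radii into $r$ close to~$1$ and $r$ bounded away from~$1$), and one must choose the neighbourhood of the arc thin enough that its intersection with~$\E$ is connected, so that the local coincidence with $g_{\un b}^-$ propagates globally. Everything else is a direct application of Theorem~\ref{thmBrSi}, together with the elementary identities $g_{\un b}^+=g$ and $b_n=a_n$ for $n\ge0$ built into the definition of a renascent right limit.
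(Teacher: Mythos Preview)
Your proof is correct and follows the same approach as the paper: both derive everything from Theorem~\ref{thmBrSi}, with part~(i) supplying reflectionlessness (hence the identification of any analytic continuation with $g_{\un b}^-$) and part~(ii) handling the polygenous case. The only cosmetic difference is that you obtain uniqueness in case~(i) via the contrapositive of Theorem~\ref{thmBrSi}(ii), whereas the paper gets it by observing that the argument identifying the continuation with $g_{\un b}^-$ applies to \emph{every} renascent right limit~$\un b$, so all \rrlcs coincide with the continuation and hence with one another.
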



\begin{proof}
Let $\un b$ be a renascent right-limit of~$g$.
Suppose there exists a closed arc
$J = \{\, \ee^{\I\om} \mid \om\in[\om_1,\om_2] \,\}$
in the neighbourhood of which~$g$ admits an analytic continuation~$h_J$.
Then
\[ \sup_{0<r<1} \int_{\om_1}^{\om_2} |g(r \ee^{\I \om})|\, \dd\om < \infty, \]
hence, by Theorem~\ref{thmBrSi}(i), $\un b$ is reflectionless on~$J$;
since $g = g_{\un b}^+$, this means that
$h_J = g_{\un b}^-$, independently of the choice of~$J$.
The uniqueness of the renascent right-limit follows too.

If on the contrary there is no analytic continuation for~$g$ across any arc
of the unit circle, then~$\un b$ is not reflectionless on any
arc thus Theorem~\ref{thmBrSi}(i) entails that the unit circle is a strong natural boundary.

The last statement follows from Theorem~\ref{thmBrSi}(ii).
\end{proof}

When an \rrlce function~$g$ has a natural boundary on the unit circle, we may
still think of the \rrlcs of~$g$ as being somewhat ``connected'' to~$g$ and consider
them as a kind of generalised analytic continuation, and the case of a unique
\rrlc may then be particularly interesting.%
%


\begin{exa}
In the case of a preperiodic sequence,
$g(z) = \sum_{k=0}^\infty a_k z^k$ with $a_k = a_{k+p}$ for all $k\ge m$, 
one checks easily that there is no renascent right limit unless $m=0$, \ie the
sequence is periodic, 
in which case $g(z) = 
(a_0 + a_1 z + \cdots + a_{p-1} z^{p-1})/(1-z^p)$
is rational and uniquely \rrlce.
%
More generally, any rational function which is regular on the Riemann
sphere minus the unit circle and whose poles are simple is uniquely \rrlce;
this follows from Theorem~\ref{thm_BWDuniqueRRL} below
(we shall see that one can even afford for an infinite set of ``poles'' on the unit
circle---we use quotation marks because the function is then no longer rational).
Notice that we restrict ourselves to simple poles because we consider only
the case of bounded Taylor coefficients.

\end{exa}

We emphasize that a holomorphic function~$g$ with bounded Taylor coefficients
may have no \rrlc at all, independently of whether the unit circle is a natural
boundary or not.
For instance, if the sequence of Taylor coefficients of~$g$ at the origin tends
to~$0$, then the only right limit of~$g$ is $b_n\equiv0$ and $g$ cannot be
\rrlce unless $g(z)\equiv0$;
the previous example also shows that no polynomial is \rrlce except
the trivial one.
Observe also that if two holomorphic functions of~$\D$ differ by a function~$h$ which
is holomorphic in a disc $\{\, |z|<R \,\}$ with $R>1$, then they have the same
right limits;
for instance, for any such~$h$, the function $h(z) + (1-z)\ii$ has only one
right limit, the constant sequence $b_n \equiv 1$,
but only when $h(z)\equiv 0$ is this right limit a renascent one.


Notice that with the usual analytic continuation it may happen that, for a given $g\in\gO(\D)$,
there are several arcs through which analytic continuation is possible but leads
to different results. 
Think \eg of $(1+z)^{1/2}(1-z)^{-1/2}$.
However, in view of Proposition~\ref{proprrlalternative}, such examples are not \rrlce.

Note also that, given $r\ge1$ and $g\in\gO(\D)$ divisible by~$z^r$, one has
\[
\text{$h$ is an \rrlc of $g$} 
\ens\Longrightarrow\ens
\text{$z^{-r}h(z)$ is an \rrlc of $z^{-r}g(z)$}
\]
as a consequence of~\eqref{eqlimankj},
but the converse is not necessarily true: 
it may be that $z^{-r}g(z)$ is \rrlce but not~$g$ itself 
(think \eg of $g(z) = z(1-z)\ii$).\footnote{%
But if $g(z)$ is divisible by~$z$ and $z\ii g(z)$ admits as \rrlc
$h(z) = -\dst\sum_{n<0} b_n z^n$,
then $b_{-1} + g(z)$ admits as \rrlc $b_{-1} + z h(z)$.}

\subsection{Functions with values in complex Banach spaces}

Part of the theory can be extended to analytic functions of the disc
with values in an arbitrary complex Banach space. Let $B$ a complex Banach space, and 
 $(a_n)_{n \geq 0}$ a bounded sequence of elements of $B$.
Then the definition of right limit (Definition \ref{D:RL})  and renascent right limit (Definition \ref{defrrl}) of $(a_n)_{n \geq 0}$ 
still makes sense, and the power series 
$$f(z) := \sum a_n z^n$$ 
defines a function holomorphic in $\D$ with values in $B$, 
and one can ask whether $f$ is rrl-continuable and what its rrl-continuations are, since Definition \ref{defrrl}
still makes sense (the only difference being that in the infinite-dimensional case it is not necessarily true that 
any bounded sequence of coefficients $(a_n)_{n \geq 0}$ has at least one right limit).

In this more general context, rrl-continuability still has consequences in terms of natural boundaries. 
In particular, one has the following weaker form of 
Proposition~\ref{proprrlalternative} where
``strong natural boundary" is replaced by ``natural boundary":

\begin{prop}
Let $g$ be an \rrlce function with values in a complex Banach space $B$. Then
\begin{enumerate}[(i)]
\item
either there is an arc of the unit circle through which $g$ admits analytic
continuation; then $g$ is uniquely \rrlce and all the analytic continuations
of~$g$ through arcs of the unit circle match and coincide with the
\rrlc of~$g$;
\item
or the unit circle is a natural boundary for~$g$.

If $g$ is polygenous, then the unit circle is a natural boundary for~$g$.
\end{enumerate}
\end{prop}

\begin{proof}
The proof proceeds exactly as the proof of Proposition 
\ref{proprrlalternative}, using instead of 
Theorem \ref{thmBrSi} the weaker form (\cite{BrSi}, Theorem~1.3).
In fact, the proof of (\cite{BrSi}, Theorem~1.3) generalizes verbatim, 
since it uses only the maximum
principle and Vitali's convergence theorem, which still hold for
vector-valued holomorphic functions (see \eg \cite{HP}).  
\end{proof}

On the other
hand, the statement for strong natural boundaries does not appear to immediately generalize; 
in fact, the proof of Theorem \ref{thmBrSi} (\cite{BrSi}, Theorem~1.4) ultimately uses the
existence of radial limits for functions in the Hardy space~$H^1$,
which does not hold in general.

\section{Dynamical examples}

\subsection{Power series generated by dynamical systems}

A first interesting class of power series which arises in connection with
dynamical systems is as follows:

\begin{Def}
Let $E$ be a metric space and $T\col E\to E$ be a continuous map. Given $\ga\in
E$, we consider its orbit $( \ga_k )_{k\ge0} = \big( T^k(\ga) \big)_{k\ge0}$.
Then, 
for any complex Banach space~$B$ 
and for any bounded function $\ph \col E \to B$, 
we say that the sequence
\[
a_k \defeq \ph\big(T^k(\ga)\big), \qquad {k\ge0}
\]
is \emph{generated by the dynamical system~$T$} (in that situation $\ph$ is
called an \emph{observable}).
\end{Def}


To determine the right limits of the power series 
$\sum_{k=0}^\infty \ph\big(T^k(\ga)\big) z^k$,
one may try to determine first the right limits of the orbit~$(T^k(\ga))$ itself
(and then to exploit continuity or discontinuity properties of the
observable~$\ph$).
The following result is a generalisation of \cite[Lemma~2.4]{BMS}.


\begin{lem}   \label{lemrlfullorb}
The right limits of the orbit $\big(T^k(\ga)\big)_{k\ge0}$ are exactly
the full orbits of~$T$ which are contained in the $\om$-limit set $\om(\ga,T)$.
\end{lem}


\begin{proof}
Recall that the full orbits of $T$ are the two-sided sequences $(\ga_n)_{n\in\Z}$ of~$E$ such that $\ga_{n+1} = T(\ga_n)$
for all $n\in\Z$.
Suppose $(\xi_n)_{n \in\Z}$ is a right limit of $(T^k(\gamma))_{k \geq
  0}$.
Then there exists a sequence $k_j \to \infty$ such that, for each~$n$,
$T^{n+k_j}(\gamma) \to \xi_n$, hence each~$\xi_n$ belongs to $\omega(\gamma, T)$.  
Moreover, by continuity of~$T$, for each~$n$ we have $T^{n+1+k_j}(\gamma) = T(T^{n+k_j}(\gamma))
\to T(\xi_n) = \xi_{n+1}$, hence $(\xi_n)_{n\in\Z}$ is a full orbit.

Conversely, suppose $(\xi_n)_{n\in\Z}$ is a full orbit of $T$ contained in
$\omega(\gamma, T)$. 
By continuity of~$T$, for each $j \geq 1$ we can choose a
neighborhood~$U_j$ of~$\xi_{-j}$ such that, if $x \in U_j$, then
$d(T^k(x), T^k(\xi_{-j})) \leq 1/j$ for all $k \in \{0, 1, \dots, 2j\}$.
Since each~$\xi_{-j}$ belongs to $\omega(\gamma, T)$, we can choose an
increasing sequence $k_j \to \infty$ such that $T^{k_j}(\gamma)$
belongs to~$U_j$ for each~$j$.
Now fix $n \in \mathbb{Z}$; for any $j \geq |n|+1$, we have
$0<n+j<2j$, thus
\[
d(T^{n+k_j+j}(\gamma), \xi_{n})  =
d\big( T^{n+j}(T^{k_j}(\ga)), T^{n+j}(\xi_{-j}) \big) \leq 1/j,
\]
whence $\lim_{j \to \infty} T^{n+k_j+j}(\gamma) = \xi_n$ 
and the claim is proven.
%
%
\end{proof}


One finds in \cite[Theorem~2]{BMS} an example of a sequence
$(a_k)_{k\ge0}$ of the form $\big(\ph\big(T^k(\ga)\big)\big)_{k\ge0}$
which has uncountably many renascent right limits:
its generating series is highly polygenous
(in that example the observable~$\ph$ is continuous but the dynamics~$T$ is a
non-invertible map of a compact interval of~$\R$;
the non-invertibility helps construct a huge set of full orbits).


\begin{exa}
The arithmetic example due to Hecke 
\beglabel{eqdefHeckeex}
\gHecke(z) = \sum_{k=1}^\infty \{k\th\} z^k, \qquad z\in\D,
\elabel
where $\th\in\R\setminus\Q$ and $\{\,\cdot\,\}$ denotes the fractional part,
was shown to have a strong natural boundary in \cite{BrSi}.
This can be viewed as a series generated by the translation $x \mapsto x+\th$ on
$\R/\Z$ for a discontinuous observable.
We shall see in Proposition~\ref{propgHecke} that~$\gHecke(z)$ is uniquely
\rrlce and that $z\ii\gHecke(z)$ has exactly two \rrlcs.
\end{exa}

\subsection{The case of symbolic dynamics}

Another class of examples arises from symbolic dynamics:
if $E = \bigcup_{k = 1}^n P_k$ is a partition of the phase space in a finite
number of sets,
we can define the piecewise constant observable
\[
\ph(x) \defeq  c_k \quad\text{for $x \in P_k$},
\]
for some choice of complex constants $c_1, \ldots, c_n$. 
Then, given a point $x$, the corresponding sequence generated by a dynamical
system $T\col E\to E$ is called \emph{itinerary} of $x$:
\[
\textbf{itin}(x) \defeq \big( \ph\big(T^k(x)\big) \big)_{k\ge0}.
%
%
\]
A powerful application is Milnor--Thurston's \emph{kneading theory} \cite{MT}.
Let $T : [0,1] \to [0,1]$ be a continuous, unimodal map, with $T(0) = T(1) = 0$ and a
critical point $c\in(0,1)$ which we assume non-periodic for simplicity;
we consider the piecewise constant observable~$\ph$ which takes the value~$1$ on
$[0,c]$ and~$-1$ on $(c,1]$.
The \emph{kneading sequence} $(\epsilon_k)_{k\ge0}$
%
%
of~$T$ is defined to be the itinerary of~$c$.
The \emph{kneading determinant} is the power series
\[
D(z) \defeq 1 + \sum_{k\ge1} \epsilon_1\cdots\epsilon_k z^k.
\]
One of the applications of the kneading determinant is to find the topological
entropy of the map. Indeed, if $s$ is the smallest positive real zero of~$D(z)$, then the
entropy of $T$ equals $- \log s$ (\cite{MT}, Theorem 6.3).

\begin{exa}
As an example, consider $T(z) \defeq z^2+c$ with $c$ the Feigenbaum parameter
($c \cong -1.401155189\ldots$).
Then its kneading determinant is known to be
\[
D(z) = \sum_{k = 0}^\infty (-1)^{\tau_k} z^k,
\]
where $\tau \defeq (01101001\dots)$ is the \emph{Thue-Morse sequence} generated
by the substitution $0 \to 01$, $1 \to 10$, starting with $0$.  
Notice that, by the defining relation of $\tau$, it is not hard to prove that
\[
D(z) = \prod_{m = 0}^\infty \big( 1-z^{2^m} \big)
\] 
(from which it follows that the entropy of~$T$ is $0$).
One can check that $D(z)$ has precisely two renascent right limits, hence the unit
circle is a strong natural boundary.
\end{exa}

A thorough investigation of the applications to symbolic dynamics will be the
object of a forthcoming article.

\subsection{Circle maps and the \rrlcy of Hecke's example}

The following result is a variant of a theorem proved in \cite{BrSi} and used
there to show that Hecke's example has a strong natural boundary on the unit circle.
It deals with the series generated by a dynamical system on the circle, with a
special kind of observable:
\begin{thm}	\label{thmVariantHecke}
%
Let  $f\col\T\to\T$ be a homeomorphism of the circle $\T \defeq \R/\Z$, and 
$x^*\in\T$ a point with dense forward orbit under $f$. 
%
Moreover, let  $B$ be a complex Banach space, $\ph \col \T \to B$ a bounded function, and 
$\De\subset\T$ a subset of the torus with empty interior.
Assume that:
\begin{itemize}
\item
 $\ph$ is continuous on $\T\setminus\De$,
\item
each point of~$\De$ is a point of discontinuity for~$\ph$ at which right and
left limits exist and $\ph$ is either right- or left-continuous.
\end{itemize}
Then the function
\[
g(z) \defeq \sum_{k=0}^\infty \ph\big(f^k(x^*)\big) z^k, 
\qquad z\in \D
\]
has the following properties:
\begin{enumerate}[(i)]
\item
If $f^k(x^*) \notin\De$ for all $k>0$, then $g$ is \rrlce.
\item
If $f^k(x^*) \notin\De$ for all $k\ge0$ and there exists $n<0$ such
that $f^n(x^*)\in\De$, then $g$ has at least two different \rrlcs.
\end{enumerate}
\end{thm}


\begin{proof}
Let us use the notation 
\[
y_j \xrightarrow{>} y^*,
\qquad\text{resp.}\quad
y_j \xrightarrow{<} y^*,
\]
if $(y_j)_{j\ge1}$ is a sequence and $y^*$ is a point in~$\T$ for which there
exist lifts $(\ti y_j)_{j\ge1}$ and $\ti y^*$ in~$\R$ such that $\lim_{j\to\infty}\ti y_j
= \ti y^*$ and, for $j$ large enough, $\ti y^*<\ti y_j<\ti y^*+\demi$,
resp.\ $\ti y^*-\demi<\ti y_j<\ti y^*$.
We set
\[
x_n \defeq f^n(x^*), \qquad n\in\Z
\]
and notice that, by the density of $\{x_k\}_{k\ge0}$ in~$\T$, for every
$y^*\in\T$ one can find increasing integer sequences $(k_j^+)_{j\ge1}$ and
$(k_j^-)_{j\ge1}$ such that $x_{k_j^\pm}\xrightarrow{\gtrless} y^*$.

\medskip

Suppose first that $f^k(x^*) \notin\De$ for all $k>0$.
Let us choose an increasing integer sequence $(k_j)_{j\ge1}$ such that 
$x_{k_j} \xrightarrow{\epsilon} x^*$ with $\epsilon$ standing for~`$>$',
unless $x^*\in\De$ and $\ph$ is left-continuous at~$x^*$, 
in which case $\epsilon$ stands for~`$<$'.
Then, for each $n\in\Z$, 
$x_{n+k_j} = f^n(x_{k_j}) \xrightarrow{\epsilon_n} f^n(x^*) = x_n$
with $\epsilon_n$ standing for~`$>$' or~`$<$' according as $f^n$ preserves or
reverses orientation,
and $b_n \defeq \lim_{j\to \infty} \ph(x_{n+k_j})$ exists because $\ph$ has both left and right limits at~$x_n$.
Now, for $n>0$, we have $x_n\notin \De$, hence $b_n = \ph(x_n)$;
for $n=0$, we also have $b_0 = \ph(x_0)$ even if $x_0=x^*\in\De$ thanks to our
choice of $(k_j)$;
therefore we have found a renascent right limit for $(\ph(x_k))_{k\ge0}$.

\medskip

Suppose now that $f^k(x^*) \notin\De$ for all $k\ge0$ and that one can pick
$\ell>0$ such that $f^{-\ell}(x^*)\in\De$.
Let us choose increasing integer sequences $(k_j^+)_{j\ge1}$ and
$(k_j^-)_{j\ge1}$ such that $x_{k_j^\pm}\xrightarrow{\gtrless} x_{-\ell}$.
For each $n\in\Z$, we have $x_{n+\ell+k_j^\pm} \xrightarrow{\epsilon^\pm_n} x_n$,
with $\epsilon^\pm_n$ depending on whether $f^{n+\ell}$ preserves or reverses orientation,
and $b^\pm_n \defeq \lim_{j\to \infty} \ph(x_{n+\ell+k_j^\pm})$ exists because $\ph$ has left and right limits at~$x_n$.
For $n\ge0$, $\ph$ is continuous at~$x_n$, thus $b^+_n = b^-_n = \ph(x_n)$, but for
$n=-\ell$ we have 
\[
b^+_{-\ell} = \lim_{x\xrightarrow{>}x_{-\ell}\hspace{-.7em}} \ph(x)
\; \neq \;
b^-_{-\ell} = \lim_{x\xrightarrow{<}x_{-\ell}\hspace{-.7em}} \ph(x),
\]
which means that we have two different renascent right limits.
\end{proof}


We now discuss the \rrlcy of Hecke's example.

\begin{prop} 	\label{propgHecke}
Let us fix $\th\in\R\setminus\Q$ and denote the fractional part function by
$\{\,\cdot\,\}$.
Then the function 
\[ \gHecke(z) \defeq \sum_{k=1}^\infty \{k\th\} z^k \]
 has a unique \rrlc,
which is
\beglabel{eqrrlcHecke}
\gHecke^-(z) = - \sum_{n<0} \{n\th\} z^n
= \gHecke(z\ii) + (1-z)\ii, \qquad z\in\E.
\elabel
Moreover, the function $z\ii \gHecke(z) = \sum_{k=0}^\infty \{(k+1)\th\} z^k$
has exactly two \rrlcs, namely
\beglabel{eqrrlcziiHecke}
-\sum_{n<0} \{(n+1)\th\} z^n = z\ii\gHecke^-(z)
\quad \text{and}\quad
-z\ii + z\ii\gHecke^-(z),
\elabel
and the unit circle is a strong natural boundary both for~$\gHecke(z)$ and
$z\ii\gHecke(z)$.
\end{prop}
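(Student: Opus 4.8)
The plan is to realise Hecke's series as generated by an irrational rotation and to classify its renascent right limits by hand. Write $\T=\R/\Z$, let $f\col x\mapsto x+\th$, and let $\ph(x)=\{x\}$ be the fractional-part function regarded on~$\T$: it is continuous on $\T\setminus\{0\}$ and at~$0$ it is right-continuous with right limit~$0$ and left limit~$1$, so the hypotheses of Theorem~\ref{thmVariantHecke} hold with $\De=\{0\}$. The Taylor coefficients of $\gHecke$ are $a_k=\{k\th\}=\ph(f^k(0))$, and those of $z\ii\gHecke(z)$ are $a'_k=\{(k+1)\th\}=\ph(f^k(\th))$; since $\th\notin\Q$ we have $f^k(0)\notin\De$ for all $k>0$, while $f^k(\th)\notin\De$ for all $k\ge0$ and $f^{-1}(\th)=0\in\De$. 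Thus Theorem~\ref{thmVariantHecke}(i)--(ii) already gives that $\gHecke$ is \rrlce and that $z\ii\gHecke$ has at least two \rrlcs, and it remains to prove the ``unique''/``exactly two'' statements, the closed forms, and the strong natural boundary.

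For $\gHecke$: let $\un b$ be any renascent right limit, arising from an increasing sequence $\{k_j\}$ with $\{(n+k_j)\th\}\to b_n$ for all $n$. Passing to a subsequence, $\{k_j\th\}\to y^*\in\T$, hence $\{(n+k_j)\th\}$ tends in~$\T$ to $y^*+n\th$, and since $\th$ is irrational $y^*+n\th=0$ for at most one~$n$; for all other~$n$, $\ph$ is continuous there and $b_n=\ph(y^*+n\th)$. Renascence forces $b_0=a_0=0$; as $\ph$ vanishes on~$\T$ only at~$0$, this forces $y^*=0$ and moreover that $\{k_j\th\}\to0$ from above (otherwise $\lim\ph$ would be~$1$). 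With $y^*=0$ the only index with $y^*+n\th=0$ is $n=0$, so $b_n=\ph(n\th)=\{n\th\}$ for every $n\neq0$; hence $\un b=\{\{n\th\}\}_{n\in\Z}$ is uniquely determined, and the unique \rrlc is $\gHecke^-(z)=-\sum_{n<0}\{n\th\}z^n$. The identity in~\eqref{eqrrlcHecke} then follows from $\{-m\th\}=1-\{m\th\}$ for $m\neq0$ together with $\sum_{m<0}z^m=-(1-z)\ii$.

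For $z\ii\gHecke(z)$: writing $f^{k_j}(\th)\to y^*$ along a defining subsequence, renascence forces $b_0=a'_0=\{\th\}$, and since $\{\th\}\notin\{0,1\}$ this pins $y^*=\{\th\}$ with no ambiguity of side. Then $b_n=\ph((n+1)\th)=\{(n+1)\th\}$ for every $n$ with $(n+1)\th\neq0$ in~$\T$, that is for all $n\neq-1$; the only free quantity is $b_{-1}=\lim\{k_j\th\}$, which tends to~$0$. The side of this approach is tied to the side by which $\{f^{k_j}(\th)\}$ approaches~$\{\th\}$ (translation preserves orientation), and both sides are realised by density of the orbit, so $b_{-1}=0$ or $b_{-1}=1$ --- exactly two renascent right limits. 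The corresponding outer functions $-\sum_{n<0}b_nz^n$, whose coefficient at each index $n\neq-1$ is $\{(n+1)\th\}$, are then $z\ii\gHecke^-(z)$ (when $b_{-1}=0$) and $z\ii\gHecke^-(z)-z\ii$ (when $b_{-1}=1$), as in~\eqref{eqrrlcziiHecke}; this uses $\{0\}=0$ and the re-indexing $-\sum_{n<0}\{(n+1)\th\}z^n=z\ii\gHecke^-(z)$.

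Finally, the strong natural boundary: for $z\ii\gHecke$ the two renascent right limits above are distinct but agree for all $n\ge0$, so Theorem~\ref{thmBrSi}(ii) applies. For $\gHecke$ itself, the renascent right limit $\{\{n\th\}\}_{n\in\Z}$ and the right limit obtained by letting $\{k_j\th\}$ approach~$0$ from below (which has $b_0=1$ and $b_n=\{n\th\}$ for $n\neq0$) are distinct and agree for all $n\ge1$, so again Theorem~\ref{thmBrSi}(ii) applies --- alternatively one may simply invoke \cite{BrSi}. The step requiring the most care throughout is the bookkeeping of the side from which the shifted orbit approaches the discontinuity at~$0$: one should work with lifts to~$\R$ exactly as in the proof of Theorem~\ref{thmVariantHecke}, use that translation preserves orientation so the side of approach of $\{k_j\th\}$ to its limit is linked to that of every shifted sequence, and use density of the orbit to realise both sides along suitable subsequences.
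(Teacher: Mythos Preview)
Your proof is correct and follows essentially the same route as the paper's: both realise $\gHecke$ via the rotation $x\mapsto x+\th$ with the fractional-part observable, invoke Theorem~\ref{thmVariantHecke} for existence, and then pin down renascent right limits by tracking the orbit limit in~$\T$ and handling the single discontinuity at~$0$. The only cosmetic differences are that the paper packages the passage from right limits of $\{a_k\}$ to right limits of the orbit via the continuous right-inverse $\pi\col\R\to\T$ of~$\ph$, and it deduces the strong natural boundary for~$\gHecke$ from that of $z\ii\gHecke$ rather than by exhibiting a second right limit of~$\gHecke$ directly as you do; your ``passing to a subsequence'' step is harmless but in fact unnecessary, since $b_0=0$ already forces $\{k_j\th\}\to0$ in~$\R$ along the full sequence.
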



\begin{proof}
Hecke's example falls into case~(i) of Theorem~\ref{thmVariantHecke}:
denoting by $\pi\col\R\to\T$ the canonical projection, we can define the
homeomorphism~$f$ by $f\circ\pi(\ti x) = \pi(\ti x+\th)$ and the discontinuous
observable~$\ph^+$ by $\ph^+\circ\pi(\ti x) = \ti x-\lfloor\ti x\rfloor$, then
$\gHecke(z) = \sum_{k=0}^\infty a_k z^k$
with $a_k \defeq \ph^+\big(f^k(\xHecke)\big)$ and $\xHecke \defeq \pi(0)$, and $\De$ is
reduced to~$\{\xHecke\}$ in that case, with $\ph^+$ right-continuous.
Therefore, since $\th$ is irrational, the forward orbit of $\xHecke$ does not hit $\xHecke$ again, hence 
$\gHecke$ has at least one \rrlc.

To show its uniqueness, we observe that $\ph^+ \col \T \to [0,1)$ is a
section of $\pi \col \R \to \T$ and the restriction of the canonical
projection $\pi^+ \col [0,1) \to \T$ is a right inverse for~$\ph^+$.
Let us thus consider a renascent right limit $\un b = (b_n)_{n\in\Z}$ of
$(a_k)_{k\ge0}$,
and set $y_n \defeq \pi(b_n)$.

The continuity of~$\pi$ entails that $(y_n)$ is a renascent right
limit of $\big( \pi(a_k) \big) = \big( f^k(\xHecke) \big)$,
thus $y_n = f^n(y_0)$ by Lemma~\ref{lemrlfullorb}.
But $y_0 = \xHecke$ because the right limit is renascent, hence $y_n
\in \T\setminus\{ \xHecke \}$ for all $n\in\Z^*$.
Since we already knew that $b_n \in [0,1]$ (because $a_k\in[0,1)$) and
$\pi(b_n) = y_n$, we deduce that, for $n\in\Z^*$, $b_n\in(0,1)$ and
$b_n=\ph^*(y_n)$.
We thus obtain~\eqref{eqrrlcHecke}
(the representation of~$\gHecke^-$ as $\gHecke(z\ii) + (1-z)\ii$ stems from
$\ti x \in\R\setminus\Z \;\Longrightarrow \;-\{-\ti x\} = \{\ti x\}-1$).

On the other hand, the function $g(z) \defeq 
\sum_{k=0}^\infty \{(k+1)\th\} z^k$
falls into case~(ii) of Theorem~\ref{thmVariantHecke}. 
Indeed, the only difference with the previous case is the initial condition,
$x^*_g = \pi(\th)$. 
Therefore, $g$ has at least two \rrlcs.
By Theorem~\ref{thmBrSi}(ii), it follows that the unit circle is a strong
natural boundary for~$g$, and thus also for~$\gHecke$.

Arguing as above, we see that any renascent right limit of~$g$ is of the form
$\un b = (b_n)_{n\in\Z}$ with $y_n \defeq \pi(b_n) = f^n(x^*_g)$; now $y_n \in
\T\setminus\{ \pi(0) \}$ only for $n\in\Z\setminus\{-1\}$, while $y_{-1} = \pi(0)$. 
Since $b_n \in [0,1]$ and $\pi(b_n) = y_n$ for all $n\in\Z$, there are only two possibilities:
$b_{-1} = 0$ or~$1$ and $b_n = \ph^+(y_n) = \ph^+\big( f^n(x^*_g)
\big)$ for $n\in\Z\setminus\{-1\}$.
Both cases are possible (if not there would be only one renascent right limit),
 thus we find the two \rrlcs indicated in~\eqref{eqrrlcziiHecke}.
%
%
%
%
\end{proof}


\begin{cor}
With the same assumptions and notations as in Proposition~\ref{propgHecke}, 
defining 
\beglabel{eqdefgHga} 
\gHe\ga(z) \defeq \sum_{k=0}^\infty \{\ga+k\th \} z^k,
\qquad z\in\D
\elabel
for every $\ga\in\R\setminus(\Z+\th\Z)$, one gets a unique \rrlc for~$\gHe\ga$:
\beglabel{eqrrlcgHga}
\gHe\ga^-(z) = - \sum_{n<0} \{\ga+n\th \} z^n = 
\gHe{-\ga}(z\ii) + z(1-z)\ii + \{\ga\}, 
\elabel
and the unit circle is a strong natural boundary.
\end{cor}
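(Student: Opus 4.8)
The plan is to read $\gHe\ga$ as a series generated by the rotation $f\col\T\to\T$, $f\circ\pi(\ti x)=\pi(\ti x+\th)$, for the fractional-part observable $\ph^+$ of the proof of Proposition~\ref{propgHecke} (\ie $\ph^+\circ\pi(\ti x)=\ti x-\lfloor\ti x\rfloor$), started this time from $x^*=\pi(\ga)$: then $a_k=\ph^+\big(f^k(x^*)\big)=\{\ga+k\th\}$, the orbit $\{f^k(x^*)\}_{k\ge0}$ is dense since $\th\notin\Q$, and $\De=\{\pi(0)\}$ with $\ph^+$ right-continuous there, exactly as before. The one genuinely new feature is the hypothesis $\ga\notin\Z+\th\Z$, which says precisely that $\pi(\ga+n\th)\ne\pi(0)$ for all $n\in\Z$, \ie the whole two-sided orbit of $x^*$ avoids~$\De$.

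Existence is then immediate: $f^k(x^*)\notin\De$ for all $k>0$, so Theorem~\ref{thmVariantHecke}(i) produces a renascent right limit and hence an \rrlc. For uniqueness I would rerun the argument of Proposition~\ref{propgHecke}: for any renascent right limit $\un b=\{b_n\}_{n\in\Z}$, the point $y_n\defeq\pi(b_n)$ defines a right limit of $\{f^k(x^*)\}_{k\ge0}$ (since $\pi$ is continuous and $\pi\circ\ph^+=\id_\T$), hence $y_n=f^n(y_0)$ by continuity of~$f$; renascence forces $b_0=a_0=\{\ga\}$, so $y_0=\pi(\ga)=x^*$ and therefore $y_n=\pi(\ga+n\th)\notin\De$ for \emph{every} $n\in\Z$; continuity of $\ph^+$ off~$\De$ then forces $b_n=\ph^+(y_n)=\{\ga+n\th\}$, so the \rrlc is unique and equals $-\sum_{n<0}\{\ga+n\th\}z^n$. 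The closed form is then a routine rewriting: no $\ga+n\th$ is an integer, so $\{-(\ga-k\th)\}=1-\{\ga-k\th\}$; after the substitution $n=-k$, splitting off the geometric part $\sum_{k\ge0}z^{-k}=z/(z-1)$ and recognising $-\sum_{k\ge1}\{\ga-k\th\}z^{-k}=\gHe\ga^-(z)$ leaves exactly $\gHe\ga^-(z)=\gHe{-\ga}(z\ii)+z(1-z)\ii+\{\ga\}$ (one uses here that $-\ga\notin\Z+\th\Z$, so $\gHe{-\ga}$ is defined).

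I expect the strong-natural-boundary assertion to be the only point needing a little thought, since Case~(ii) of Theorem~\ref{thmVariantHecke} does \emph{not} apply here---no negative iterate of $x^*$ lies in~$\De$, consistently with $\gHe\ga$ being uniquely \rrlce---so one cannot obtain two distinct \rrlcs. The remedy is to exhibit a pair of \emph{non}-renascent right limits of $\gHe\ga$ to which Theorem~\ref{thmBrSi}(ii) applies. By density, pick increasing integer sequences $k_j^\pm$ with $f^{k_j^\pm}(x^*)\xrightarrow{\gtrless}\pi(\th)$ in the one-sided sense of the proof of Theorem~\ref{thmVariantHecke}; since every $f^n$ is an orientation-preserving rotation, $f^{n+k_j^\pm}(x^*)\xrightarrow{\gtrless}\pi\big((n+1)\th\big)$ for all $n$, which yields right limits $\un b^\pm$ of the coefficient sequence with $b^\pm_n=\{(n+1)\th\}$ for $n\ne-1$ and $b^+_{-1}=0\ne1=b^-_{-1}$ (the right and left limits of $\ph^+$ at $\pi(0)$). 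These two right limits are distinct and coincide for all $n\ge0$, so Theorem~\ref{thmBrSi}(ii) gives the strong natural boundary for~$\gHe\ga$. One could equivalently remark that the set of right limits of $\gHe\ga$ depends only on the orbit closure~$\T$ and on $(f,\ph^+)$, not on the particular dense starting point, hence coincides with that of $\gHecke$ and already contains such a matching pair. The rest is bookkeeping: checking that the $\un b^\pm$ really are right limits of $\{a_k\}_{k\ge0}$ via the density, and that orientation-preservation of the rotation propagates the one-sided approach from index~$0$ to all indices.
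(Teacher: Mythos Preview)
Your argument is correct. Existence and uniqueness of the \rrlc, as well as the closed form, are handled exactly as in the paper: Theorem~\ref{thmVariantHecke}(i) for existence, the $\pi$-projection argument of Proposition~\ref{propgHecke} for uniqueness.

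For the strong natural boundary the paper takes a slightly different route. Instead of producing two matching right limits of~$\gHe\ga$ and invoking Theorem~\ref{thmBrSi}(ii), it exhibits a single right limit of~$\gHe\ga$, namely $\{n\th\}_{n\in\Z}$ (obtained by choosing $k_j$ so that $a_{k_j}\to0$, then projecting via~$\pi$), and observes that this is the renascent right limit of~$\gHecke$; since Proposition~\ref{propgHecke} already established that~$\S$ is a strong natural boundary for~$\gHecke$, this right limit cannot be reflectionless on any arc, and Theorem~\ref{thmBrSi}(i) finishes. Your approach is a bit more self-contained---you construct the pair $\un b^\pm$ directly for~$\gHe\ga$ and apply Theorem~\ref{thmBrSi}(ii), bypassing the appeal to the already-proved strong boundary of~$\gHecke$---while the paper's approach is shorter because it recycles Proposition~\ref{propgHecke}. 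The two are closely related: your $\un b^\pm$ are, up to a shift, precisely the two renascent right limits of $z\ii\gHecke(z)$ found in Proposition~\ref{propgHecke}, which is essentially your parenthetical remark that the set of right limits depends only on the orbit closure and on $(f,\ph^+)$, not on the starting point.
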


\begin{proof}
The existence of the \rrlc is guaranteed by Theorem~\ref{thmVariantHecke}(i),
exactly as in the proof of Proposition~\ref{propgHecke}.
One finds that~\eqref{eqrrlcgHga} is the only possible \rrlc by following the
same lines.

Since~$0$ is an accumulation point of the sequence 
$(a_k)_{k\ge0} = \big( \{\ga+k\th\} \big)_{k\ge0}$ of $(0,1)$, we can
find a right limit $(b_n)_{n\in\Z}$ such that $b_0=0$, 
and $b_n \in [0,1]$ for every $n\in\Z$.
With the same notations for $\pi$, $f$ and~$\ph^+$ as in the proof of
Proposition~\ref{propgHecke}, since $\pi$ is continuous, it maps $(b_n)_{n\in\Z}$ onto a
right limit $(y_n)_{n\in\Z}$ of the sequence 
$\big(\pi(a_k)\big)_{k\ge0} = \big(f^k\circ\pi(\ga)\big)_{k\ge0}$, 
which is necessarily of the form $y_n = f^n(y_0)$ (by Lemma~\ref{lemrlfullorb}).
Since $y_0 = \pi(0)$, we see that, for $n\in\Z^*$, $y_n \in
\T\setminus\{\pi(0)\}$ and $\pi(b_n) = y_n$, hence $b_n\in(0,1)$ and $b_n = \ph^+(y_n)$.

We just obtained that $(\{n\th\})_{n\in\Z}$ is a right limit of~$\gHe\ga$.
By virtue of Proposition~\ref{propgHecke}, this right limit is not
reflectionless on any arc, Theorem~\ref{thmBrSi}(i) thus implies that the unit
circle is a strong natural boundary.
\end{proof}


\begin{rem}
There is a relationship between the arithmetical properties of~$\th$ and the
functions~$\gHe\ga$: namely, for any $0\le \ga_1 < \ga_2 <1$ one has 
\[
\gHe{\ga_1}(z) - \gHe{\ga_2}(z) - \frac{\ga_2-\ga_1}{1-z} =
\sum_{k\in\cN(\ga_1,\ga_2)} z^k,
\]
where $\cN(\ga_1,\ga_2)$ is the set of visiting times of the sequence
$\big( \{k\th\} \big)_{k\ge0}$ in $[1-\ga_2,1-\ga_1)$, that is
$\cN(\ga_1,\ga_2) \defeq \{\, k\ge0 \mid k\th \in [-\ga_2,-\ga_1)+\Z\,\}$.
\end{rem}


\begin{rem}
Let $\th\in\R\setminus\Q$ and $\ga\in\R$. Replacing the fractional
part function with $t\in\R \mapsto \abs{t}_\Z \defeq \dist(t,\Z)$, we
get an example which looks similar:
\[
g_\ga^+(z) \defeq \sum_{k=0}^\infty \abs{\ga+k\th}_\Z \, z^k,
\qquad z \in \D.
\]
However, the corresponding observable is continuous (it is the
function $\ph \col \T \to \R$ defined by $\ph\big(\pi(\ti x)\big) = \abs{\ga+\ti x}_\Z$), 
hence the study of~$g_\ga^+$ requires completely different techniques. We shall 
prove unique rrl-continuability of~$g_\ga^+$ in Corollary~\ref{corggaHaman}.

\end{rem}


\section{Poincar\'e simple pole series and generalised continuation}

The second half of this article is dedicated to what is probably the simplest
non-trivial situation in which one might wish to test the notion of \rrlcy.
Namely, we will prove that every Poincar\'e simple pole series is uniquely rrl-continuable
(Theorem~\ref{thm_BWDuniqueRRL}); this result was announced
without proof in \cite[Appendix~A.2]{BMS}.

\subsection{Poincar\'e simple pole series and \rrlcy} \label{S:PSP}

We will use the same notations as in Definition~\ref{definnout} for $\PP$, $\D$
and~$\E$,
and denote by~$\S$ the unit circle viewed as a subset of $\C\subset\PP$.

For any real number $\nu>0$ and complex Banach space
$\big(B,\norm{\,\cdot\,}\big)$, we denote by $\enSB$ the set of all
functions $\rho \col \S \to B$ such that the family $\big(
\norm{\rho(\la)}^\nu \big)_{\la\in\S}$ is summable, \ie
\[ \dst\sum_{\la\in\S}\norm{\rho(\la)}^\nu < \infty. \]
Given $\rho\in\enSB$, its \emph{support} is the set (finite or countably infinite)
\[
\supp\rho \defeq \{ \la \in \S \mid \rho(\la) \neq 0 \}. 
\]
%

\begin{Def} \label{D:PSP}
  Let $\rho \in \elSB$. The \emph{Poincar\'e simple pole series}
  (PSP-series for short) associated with~$\rho$ is the $B$-valued
  function $\Sig(\rho)$ defined by
\beglabel{eqdefSigrho}
\Sig(\rho)(z) \defeq \sum_{\la\in\S} \frac{\rho(\la)}{z-\la},
\qquad z \in \PP\setminus\ov{\supp\rho}.
\elabel

The series \eqref{eqdefSigrho} converges normally on every compact
subset of $\PP\setminus\ov{\supp\rho}$, thus in particular it defines
a $B$-valued holomorphic function on $\D \cup \E$.  We shall call
respectively
\emph{inner} and \emph{outer} PSP-series the restrictions of $\Sigma(\rho)$ to the inside and outside the unit disc, 
and denote them by
\[
\Sig(\rho)^+ \defeq \Sig(\rho)_{|\D} \in \gO(\D, B), \qquad
\Sig(\rho)^- \defeq \Sig(\rho)_{|\E} \in \gO(\E, B).
\]
We say that the outer PSP-series $\Sig(\rho)^-$ is \emph{associated} with the
inner PSP-series $\Sig(\rho)^+$.
\end{Def}


The class of inner PSP-series is an interesting class
of functions for which we will prove unique \rrlcy.
It clearly contains the rational functions which are regular on $\PP\setminus\S$
and whose poles are simple, but we are more interested in the case where $\S$ is
a natural boundary.
Our terminology is motivated by Poincar\'e's article \cite{PoinF} (see also
\cite{PoinA}), where he studies this kind of series;
assuming that the support of~$\rho$ is dense in~$\S$, Poincar\'e proves that the
unit circle is a natural boundary for $\Sig(\rho)^\pm$ and he discusses the
relationship between the two functions.
Later Borel, Wolff and Denjoy studied such series, considering also more general
distributions of poles~$\la$ (not restricted to lie on~$\S$).
The subclass of PSP-series $\Sig(\rho)$ with $\supp\rho$ contained in the set of roots of unity
was studied in \cite{MS1} for dynamical reasons, in relation with small divisor
problems.


Any inner PSP-series uniquely determines the associated outer PSP-series,
because $\rho$ and thus $\Sig(\rho)^-$ are uniquely determined by $\Sig(\rho)^+$:

\begin{lem}	\label{lemlimrad}
Let $B$ be a complex Banach space and $\rho \in \elSB$. Then 
\[
\rho(\la) = \lim \ (z-\la) \,\Sig(\rho)^+(z)
\quad \text{as $z\to\la$ radially}
\]
for every $\la\in\S$.
Therefore the map $\rho \in \elSB \mapsto \Sig(\rho)^+ \in \gO(\D, B)$ is
injective.
\end{lem}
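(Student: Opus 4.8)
The plan is to prove the two claims of Lemma~\ref{lemlimrad} in turn: first the radial-limit formula $\rho(\la) = \lim_{r\to1^-}(r\la-\la)\,\Sig(\rho)^+(r\la)$ for a fixed $\la\in\S$, and then deduce injectivity of $\rho\mapsto\Sig(\rho)^+$ as an immediate corollary.

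\medskip\noindent\textbf{Proof of the radial-limit formula.} Fix $\la\in\S$ and write $z = r\la$ with $r\in(0,1)$, so that $z\to\la$ radially as $r\to1^-$. Split the sum defining $\Sig(\rho)^+(z)$ into the single term coming from the pole at~$\la$ itself (which is zero unless $\la\in\supp\rho$, in which case it contributes $\rho(\la)/(z-\la)$) and the remainder
\[
R(z) \defeq \sum_{\mu\in\S,\,\mu\neq\la} \frac{\rho(\mu)}{z-\mu}.
\]
For the pole term one has $(z-\la)\cdot\frac{\rho(\la)}{z-\la} = \rho(\la)$ exactly, for all $r<1$. So the claim reduces to showing $(z-\la)R(z)\to 0$ as $r\to1^-$. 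The point is that multiplying by $z-\la$ produces a summable family of functions that tends pointwise to zero, and we want to pass the limit inside the sum; this is where the summability of $\{\rho(\mu)\}$ together with a uniform bound is used. The key estimate is the elementary geometric inequality $|z-\mu| \ge \demi\,|\la-\mu|$ valid for all $\mu\in\S$ once $r$ is close enough to~$1$ (say $r>r_0$ depending only on a crude separation, or more carefully: for $z=r\la$ and $|\mu|=1$ one checks $|r\la-\mu|\ge \max(1-r,\, c|\la-\mu|)$ for an absolute constant~$c$, e.g.\ by writing everything in terms of the angle between $\la$ and $\mu$). Hence $\bigl|\frac{z-\la}{z-\mu}\bigr| \le \frac{|z-\la|}{c|\la-\mu|} = \frac{1-r}{c|\la-\mu|}$ for $\mu\neq\la$, so each term $\bigl|\frac{(z-\la)\rho(\mu)}{z-\mu}\bigr|$ is bounded by $\frac{1-r}{c}\cdot\frac{|\rho(\mu)|}{|\la-\mu|}$. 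This does not obviously give a summable majorant because $\frac{1}{|\la-\mu|}$ may blow up as $\mu\to\la$; the fix is to split the remaining sum further into the poles $\mu$ with $|\la-\mu|\ge\eps$ and those with $0<|\la-\mu|<\eps$. For the far-away poles, the majorant $\frac{1-r}{c\eps}\sum|\rho(\mu)|$ tends to $0$ as $r\to1^-$ for each fixed~$\eps$. For the near poles, use instead the uniform bound $\bigl|\frac{z-\la}{z-\mu}\bigr|\le \frac{2}{c}$ (from the same geometric inequality, now without the $1-r$ factor), giving a majorant $\frac{2}{c}\sum_{0<|\la-\mu|<\eps}|\rho(\mu)|$, which is as small as we like by choosing $\eps$ small, by summability. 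Combining: given $\eta>0$, pick $\eps$ so the near-pole contribution is $<\eta/2$ for all $r$, then pick $r$ close to $1$ so the far-pole contribution is $<\eta/2$; hence $(z-\la)R(z)\to0$. Therefore $\lim_{r\to1^-}(z-\la)\Sig(\rho)^+(z) = \rho(\la)$.

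\medskip\noindent\textbf{Injectivity.} Suppose $\rho_1,\rho_2\in\elSC$ with $\Sig(\rho_1)^+ = \Sig(\rho_2)^+$ on~$\D$. Then for every $\la\in\S$, applying the formula just proved to both gives $\rho_1(\la) = \lim_{z\to\la\text{ rad.}}(z-\la)\Sig(\rho_1)^+(z) = \lim_{z\to\la\text{ rad.}}(z-\la)\Sig(\rho_2)^+(z) = \rho_2(\la)$. Hence $\rho_1=\rho_2$, so the map is injective.

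\medskip\noindent\textbf{Main obstacle.} The only nontrivial point is the interchange of limit and infinite sum in the estimate for $R(z)$: a naive dominated-convergence argument fails because the pointwise-in-$r$ majorant $\frac{|\rho(\mu)|}{|\la-\mu|}$ need not be summable when $\supp\rho$ accumulates at~$\la$. The two-scale splitting into near and far poles (with the $\eps$-cutoff) is what circumvents this, trading the small-$|\la-\mu|$ divergence against the tail-summability of $\rho$ and the $1-r$ gain for the bounded-away part. Everything else is the geometry of the circle, captured in the single inequality $|r\la-\mu|\ge c\,|\la-\mu|$ for $r$ near~$1$.
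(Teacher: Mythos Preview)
Your argument is correct, but you have made it much harder than necessary. The paper's proof is a one-line application of dominated convergence: for $z=r\la^*$ on the radius and any $\la\in\S$ with $\la\neq\la^*$, one simply has
\[
\left|\frac{z-\la^*}{z-\la}\right|\le 1,
\]
because $\la^*$ is the nearest point of~$\S$ to $r\la^*$ (equivalently, $|z-\la^*|=1-r\le|z-\la|$). This gives the summable majorant $|\rho(\la)|$ directly, and since each term $\frac{z-\la^*}{z-\la}\to 0$ as $r\to 1^-$, dominated convergence finishes the proof immediately.

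You actually state the needed inequality yourself, inside the parenthetical ``$|r\la-\mu|\ge\max(1-r,\,c|\la-\mu|)$'': the first branch $|z-\mu|\ge 1-r=|z-\la|$ is exactly the bound $\bigl|\tfrac{z-\la}{z-\mu}\bigr|\le 1$. So your ``main obstacle'' is illusory---the naive dominated-convergence argument does not fail, provided one uses this majorant rather than $\frac{|\rho(\mu)|}{|\la-\mu|}$. Your two-scale $\eps$-splitting is a valid workaround, but it is extra machinery for a difficulty that is not really there. (Also, the constant $2/c$ in your near-pole bound is not clearly justified as written; the clean constant from $|z-\mu|\ge 1-r$ is simply~$1$.)
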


\begin{proof}
Given $\la^*\in\S$, we can write
\[
(z-\la^*) \,\Sig(\rho)^+(z) - \rho(\la^*) =
\sum_{\la \in \S\setminus\{\la^*\}} \rho(\la) \frac{z-\la^*}{z-\la\,},
\qquad z\in\D.
\]
For each $\la \in \S\setminus\{\la^*\}$, we have
$z\in[0,\la^*]$ $\Rightarrow$ $\abs*{z-\la^*} < \abs*{z-\la}$,
thus $\norm*{\rho(\la)\frac{z-\la^*}{z-\la\,}}
\le \norm*{\rho(\la)}$.
Now, since $\frac{z-\la^*}{z-\la\,} \to 0$ as $z\to\la^*$,
the result follows by dominated convergence.
\end{proof}

\begin{rem}
In fact, one even has
\[
\rho(\la) = \lim\  (z-\la) \,\Sig(\rho)^+(z)
\quad \text{as $z\to\la$ non-tangentially}
\]
for every $\la\in\S$.
\end{rem}


Lemma~\ref{lemlimrad} shows that, if $\la \in \supp\rho$, the
function~$\Sig(\rho)^+$ is not bounded on the ray $[0,\la]$, hence $\la$ is
necessarily a singular point of the function. This entails a dichotomy:
\begin{enumerate}[(a)]
\item
either $\supp\rho$ is not dense in~$\S$; then $\PP\setminus\ov{\supp\rho}$ is
connected and $\S\setminus\ov{\supp\rho}$ is a countable union of open arcs of
the unit circle, in the neighbourhood of which $\Sig(\rho)$ is holomorphic; we
can thus view $\Sig(\rho)^+$ and~$\Sig(\rho)^-$ as the analytic continuation 
of each other through any of these arcs;
\item
or $\supp\rho$ is dense in the unit circle, $\D$ and~$\E$ are the two connected
components of $\PP\setminus\ov{\supp\rho} = \PP\setminus\S$
and the unit circle is a natural boundary for both $\Sig(\rho)^+$
and~$\Sig(\rho)^-$.
\end{enumerate}

%


%
Our main theorem about inner PSP-series is in terms of the notion of generalised
analytic continuation discussed in Section~\ref{secremindrrlc}:
the outer PSP-series is the unique \rrlc of the inner PSP-series with which it
is associated.
This result was stated in the introduction as
Theorem~\ref{thmPSPuniquerrlce} in the case where $B=\C$, we restate
it now in full generality:

\begin{thm}	\label{thm_BWDuniqueRRL}
Let $B$ a complex Banach space.
Suppose that $g = \Sigma(\rho)^+ \in\gO(\D,B)$ is an inner PSP-series,
with $\rho \in \elSB$. Then~$g$ is uniquely \rrlce and
its \rrlc is the associated outer PSP-series $\Sigma(\rho)^-$.
\end{thm}

The proof will be given in Section~\ref{secproofmainthm}.
Together with Proposition~\ref{proprrlalternative}, this immediately yields


\begin{cor}	\label{corBWDrrl} 
Assume that $B=\C$. 
Suppose that the unit circle is a natural boundary for an inner
PSP-series $g = \Sigma(\rho)^+ \in \gO(\D)$ 
(\ie the support of the corresponding $\rho\in\elSC$ is dense in~$\S$).
Then the unit circle is a strong natural boundary for~$g$.
\end{cor}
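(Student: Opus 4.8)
The plan is to read the corollary off from Theorem~\ref{thm_BWDuniqueRRL} together with Proposition~\ref{proprrlalternative}: once the theorem is in hand there is essentially nothing more to do.

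First I would invoke Theorem~\ref{thm_BWDuniqueRRL} to know that the inner PSP-series $g = \Sig(\rho)^+$ is \rrlce (in fact uniquely so). Proposition~\ref{proprrlalternative} then applies and places us in exactly one of its two alternatives. Alternative~(i) is excluded under our hypothesis: it would furnish an analytic continuation of~$g$ across some arc of~$\S$, hence a regular point on the unit circle, contradicting the assumption that~$\S$ is a natural boundary for~$g$. Therefore alternative~(ii) must hold, which is precisely the assertion that~$\S$ is a strong natural boundary for~$g$.

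The one auxiliary point I would make explicit is the equivalence hidden in the parenthetical clause, namely that for an inner PSP-series $g = \Sig(\rho)^+$ the unit circle is a natural boundary for~$g$ exactly when $\supp\rho$ is dense in~$\S$. This is the dichotomy~(a)/(b) recorded just before Theorem~\ref{thm_BWDuniqueRRL}: by Lemma~\ref{lemlimrad} every point of $\supp\rho$ is a singular point of~$g$, so if $\supp\rho$ is dense then, the set of singular points being closed, every point of~$\S$ is singular; whereas if $\supp\rho$ is not dense then on each open arc of $\S\setminus\ov{\supp\rho}$ the series~\eqref{eqdefSigrho} converges normally and yields a holomorphic extension, so such arcs consist of regular points.

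There is no genuine obstacle here: all the work sits inside Theorem~\ref{thm_BWDuniqueRRL}, whose proof is deferred to the later sections. The corollary merely records that, for PSP-series with dense poles, the a priori weaker notion of natural boundary automatically upgrades to the strong one.
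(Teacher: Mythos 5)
Your argument is exactly the paper's: Theorem~\ref{thm_BWDuniqueRRL} makes $g$ \rrlce, and Proposition~\ref{proprrlalternative} then forces alternative~(ii) since the natural-boundary hypothesis rules out alternative~(i); the dichotomy you spell out for the parenthetical clause is likewise the one recorded before the theorem. Correct, and essentially identical to the paper's (one-line) deduction.
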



\subsection{An example based on harmonic analysis}

With the help of Theorem~\ref{thm_BWDuniqueRRL} we easily get

\begin{prop}    \label{propPSPharman}
Let $B$ be a complex Banach space.
Let $\ph \col \R \to B$ be continuous and $1$-periodic and assume that the sequence
$(\hat\ph(j))_{j\in\Z}$ of its Fourier coefficients is absolutely convergent:
\beglabel{ineqcondF}
\sum_{j\in\Z} \abs{\hat\ph(j)} < \infty.
\elabel
Then, for any $\th\in\R\setminus\Q$, the sum of the convergent power series
\beglabel{eqdefphnth}
g^+(z) \defeq \sum_{n\ge0} \ph(n\th) z^n, \qquad z\in\D
\elabel
is an inner PSP-series, thus uniquely \rrlce, with associated outer PSP-series given by
\beglabel{eqdefBWDphnth}
g^-(z) \defeq -\sum_{n<0} \ph(n\th) z^n, \qquad z\in\E.
\elabel
Moreover, if $\ph$ is not a trigonometric polynomial, then the unit circle is a
natural boundary for~$g^+$ (a strong one if $B=\C$).
\end{prop}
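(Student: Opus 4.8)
The plan is to expand the continuous $1$-periodic function $\ph$ in its Fourier series and then interchange the order of summation. Writing $\ph(x) = \sum_{j\in\Z} \hat\ph(j)\,\ee^{2\pi\I j x}$, which converges absolutely and uniformly by~\eqref{ineqcondF}, we get for $z\in\D$
\[
g^+(z) = \sum_{n\ge0} \Big( \sum_{j\in\Z} \hat\ph(j)\,\ee^{2\pi\I j n\th} \Big) z^n
= \sum_{j\in\Z} \hat\ph(j) \sum_{n\ge0} \big( \ee^{2\pi\I j\th} z \big)^n
= \sum_{j\in\Z} \frac{\hat\ph(j)}{1 - \ee^{2\pi\I j\th} z}.
\]
The interchange is justified because, for $z$ in a compact subset of~$\D$, the double sum is dominated by $\sum_j \abs{\hat\ph(j)} \sum_n \abs{z}^n < \infty$ (here one uses $\abs{\ee^{2\pi\I j\th} z} = \abs{z} < 1$). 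Rewriting each term as $\frac{\hat\ph(j)}{1-\ee^{2\pi\I j\th}z} = \frac{-\ee^{-2\pi\I j\th}\hat\ph(j)}{z - \ee^{-2\pi\I j\th}}$, we see that $g^+$ is the inner PSP-series $\Sig(\rho)^+$ associated with $\rho\in\elSC$ defined by $\rho(\ee^{-2\pi\I j\th}) = -\ee^{-2\pi\I j\th}\hat\ph(j)$; note that the poles $\ee^{-2\pi\I j\th}$, $j\in\Z$, are distinct since $\th\notin\Q$, so this prescription is consistent, and $\sum_j \abs{\rho(\ee^{-2\pi\I j\th})} = \sum_j \abs{\hat\ph(j)} < \infty$ as required.

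Next I would identify the outer PSP-series. By definition $\Sig(\rho)^-(z) = \sum_{j\in\Z} \frac{\rho(\ee^{-2\pi\I j\th})}{z - \ee^{-2\pi\I j\th}}$ for $z\in\E$; expanding each summand in powers of $z\ii$ (valid since $\abs{\ee^{-2\pi\I j\th}/z} < 1$) and interchanging sums by the same dominated-convergence argument gives $\Sig(\rho)^-(z) = -\sum_{n<0} \big(\sum_{j\in\Z}\hat\ph(j)\,\ee^{2\pi\I j n\th}\big) z^n = -\sum_{n<0}\ph(n\th) z^n = g^-(z)$, which is precisely~\eqref{eqdefBWDphnth}.

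For the final assertion, I would argue by contraposition: if the unit circle is \emph{not} a strong natural boundary for $g^+$, I want to conclude that $\ph$ is a trigonometric polynomial. By Theorem~\ref{thmBrSi}(i), every right limit of $g^+$ is reflectionless on some arc $J$. The Taylor coefficients of $g^+$ are $a_n = \ph(n\th)$, the orbit of $0$ under the circle rotation $x\mapsto x+\th$ evaluated by the continuous observable~$\ph$; by the Lemma on right limits of orbits (and continuity of~$\ph$), every right limit of $\{a_n\}$ has the form $b_n = \ph(\al + n\th)$ for some $\al\in\T$. Hence $g^+_{\un b}$ and $g^-_{\un b}$ are, by the computation above, the inner and outer PSP-series attached to the shifted function $x\mapsto\ph(\al+x)$; reflectionlessness on~$J$ forces $\Sig(\rho_\al)^+$ to extend analytically across~$J$, so by the dichotomy (a)/(b) for PSP-series, $\supp\rho_\al$ is not dense in~$\S$. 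But $\supp\rho_\al = \{\ee^{-2\pi\I j\th} : \hat\ph(j)\neq 0\}$ is independent of~$\al$ and, because $\th$ is irrational, the only way $\{\ee^{-2\pi\I j\th} : j\in S\}$ can fail to be dense in~$\S$ for a set $S\subseteq\Z$ is for $S$ to be finite. Thus $\hat\ph(j)=0$ for all but finitely many~$j$, i.e.\ $\ph$ is a trigonometric polynomial.

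The main obstacle I anticipate is the last step: one must be careful in deducing from "$g^+$ is not a strong natural boundary" that \emph{some} right limit extends across \emph{some} arc and that this in turn pins down $\supp\rho$. The cleanest route is the one above — invoke Theorem~\ref{thmBrSi}(i) to get reflectionlessness of \emph{every} right limit on a common arc, apply it to a genuine right limit (e.g.\ $b_n = \ph(n\th)$ itself, which \emph{is} a right limit via the density of the orbit), and then use that an analytic continuation of an inner PSP-series across an arc is incompatible with density of the pole support, which by irrationality of~$\th$ is equivalent to infinitude of $\{j : \hat\ph(j)\neq 0\}$.
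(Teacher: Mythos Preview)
Your identification of $g^\pm$ with $\Sig(\rho)^\pm$ for $\rho(\ee^{-2\pi\I j\th}) = -\ee^{-2\pi\I j\th}\hat\ph(j)$ is correct and is exactly the paper's argument; the paper simply invokes formula~\eqref{eqdefbnBWD} instead of writing out the geometric-series manipulation, but the content is identical.

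For the final assertion the paper's proof says nothing explicit, relying implicitly on Corollary~\ref{corBWDrrl}. Your direct route through Theorem~\ref{thmBrSi}(i) and the dichotomy (a)/(b) is sound up to the last step, and it reduces the question to the same thing Corollary~\ref{corBWDrrl} would require: that $\supp\rho = \{\ee^{-2\pi\I j\th} : \hat\ph(j)\neq 0\}$ be dense in~$\S$.

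Your closing sentence, however, is wrong. It is \emph{not} true that for irrational~$\th$ an infinite set $S\subset\Z$ always produces a dense set $\{\ee^{-2\pi\I j\th} : j\in S\}$ in~$\S$. For a concrete counterexample, fix~$\th$ irrational and let $S \defeq \{\,j\in\Z : j\th \bmod 1 \in [0,\tfrac12)\,\}$; then~$S$ is infinite but $\{\ee^{-2\pi\I j\th} : j\in S\}$ lies in a half-circle. If one then takes $\hat\ph(j) = 2^{-|j|}$ for $j\in S$ and $\hat\ph(j)=0$ otherwise, one obtains a continuous~$\ph$ with $\sum_j|\hat\ph(j)|<\infty$ which is not a trigonometric polynomial, yet $\supp\rho$ is not dense and $g^+ = \Sig(\rho)^+$ continues analytically across an arc---so the unit circle is not even an ordinary natural boundary, let alone a strong one. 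Thus the implication ``$\ph$ not a trigonometric polynomial $\Rightarrow$ strong natural boundary'' cannot be obtained by your argument (nor by a bare appeal to Corollary~\ref{corBWDrrl}); some additional hypothesis ensuring density of $\supp\rho$ is needed, and the paper's own proof does not supply one.
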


\begin{proof}
Consider the pairwise distinct points $\la_j = \ee^{-2\pi\I j\th}$, $j\in\Z$,
and define
$\rho \col \S \to\C$ by
\[
\rho(\la_j) = - \la_j \hat\ph(j), \qquad j\in\Z,
\]
and $\rho(\la) = 0$ if $\la$ is not an
integer power of $\ee^{-2\pi\I\th}$; 
then $\rho \in \elSB$ and 
$-\sum \rho(\la) \la^{-n-1} = \sum \hat\ph(j) \ee^{2\pi\I jn\th} = \ph(n\th)$
for every $n\in\Z$,
hence the formula~\eqref{eqdefbnBWD} for the Taylor coefficients at~$0$ of an
inner PSP-series and the coefficients at~$\infty$ of the associated
outer PSP-series yields
$g^\pm = \Sig(\rho)^\pm$.
The rest follows from Theorem~\ref{thm_BWDuniqueRRL}.
\end{proof}


The previous result is reminiscent of Hecke's example: 
the formula~\eqref{eqrrlcHecke} that we obtained for its \rrlc looks like
an echo of~\eqref{eqdefBWDphnth}.
In Hecke's example, however,
the observable~$\ph^+$ violates condition~\eqref{ineqcondF} (since its
Fourier coefficients decay only as $\frac{1}{j}$) and is not continuous.
Here is an example, similar in nature to Hecke's example or its
generalisation~\eqref{eqdefgHga}, but satisfying the assumptions of
Proposition~\ref{propPSPharman}:

\begin{cor}   \label{corggaHaman}
  We set, for any $t\in\R$, $\abs{t}_\Z \defeq \dist(t,\Z)$ the
  distance from~$t$ to the closest integer.
Let $\th \in \R\setminus\Q$.
Then, for each $\ga\in\R$, the sum of the convergent power series
\beglabel{eqggaharman}
g_\ga^+(z) \defeq \sum_{n\ge0} \abs{\ga+n\th}_\Z \, z^n, \qquad z\in\D
\elabel
is an inner PSP-series, thus uniquely \rrlce, with \rrlc given by
\beglabel{eqoutggaharman}
g_\ga^-(z) \defeq -\sum_{n<0} \abs{\ga+n\th}_\Z \, z^n 
= -g_{-\ga}^+(z\ii) + \abs{\ga}_\Z, \qquad z\in\E
\elabel
and the unit circle is a strong natural boundary for~$g_\ga^+$.
\end{cor}

\begin{proof}
The Fourier coefficients of $\ph_\ga(t) \defeq \abs{\ga+t}_\Z$ are
$O(1/j^2)$ and $\ph_\ga(-t) = \ph_{-\ga}(t)$.
\end{proof}

\subsection{Poincar\'e simple pole series and monogenic regularity} \label{S:mon}

In this section we will compare the notion of rrl-continuability
developed so far to the property of \emph{monogenic regularity}
introduced by Borel \cite{Bo}. 
%
Monogenic regularity is an alternative way to generalised analytic
continuation, and spaces of monogenic functions enjoy
quasianalyticity properties that we will recall in this section.
We will prove that, at least for certain PSP-series, the two theories
of continuation overlap, since the outer function is the continuation
of the inner function according to both definitions.


Let~$B$ be a complex Banach space.
Given $K' \subset K \subset \PP$ and a linear space~$E$ of $B$-valued
functions defined on~$K$,
we say that~$K'$ is a \emph{uniqueness set} for~$E$ if
the only function of~$E$ which vanishes on~$K'$ is the zero function,
and we say that~$E$ is \emph{$\gH^1$-quasianalytic relatively to $K'$} if any subset of~$K'$
of positive one-dimensional Hausdorff measure is a uniqueness set
for~$E$.


For a closed subset~$K$ of~$\PP$, we denote by $\gO(K,B)$ the space
of all $B$-valued functions which are continuous on~$K$ and
holomorphic in the interior of~$K$;
and we denote by $\gC^1_{hol}(K, B) \subset \gO(K,B)$ the Banach space of all $B$-valued
functions which are $C^1$-holomorphic on~$K$ (\ie Whitney-differentiable in the complex
sense---see \cite{MS1} or \cite{CMS} for a precise definition).

Let $(K_j)_{j\ge0}$ be a monotonic non-decreasing sequence of closed
subsets of~$\PP$.  
The space of Borel monogenic functions $\gM\bigl( (K_j), B)$ is
defined as the projective limit
\[ \gM\bigl( (K_j), B) \defeq \lim_{\leftarrow} \gC^1_{hol}(K_j, B). \]
It is proven in \cite{MS2} that, for certain sequences $(K_j)$, the
space $\gM\bigl( (K_j), B)$ is $\gH^1$-quasianalytic relatively to $K
\defeq \bigcup K_j$. 
The next proposition shows that PSP-series enjoy this quasianalyticity
property, at least for $\rho \in \qlSB$.


\begin{thm}	\label{thmmonog}
Let $\rho \in \qlSB$.
Then there exists an increasing sequence $(K_j)$ of compact subsets of~$\PP$
such that 
\begin{itemize}
\item
  the set $K\defeq \bigcup K_j$ has its complement contained in
  $\ov{\supp\rho} \subset \S$ and of zero Haar measure;
\item
  the function~$\Sig(\rho)$ has a unique continuous extension to~$K$;
\item
  the space of Borel monogenic functions associated with $(K_j)$,
\[
\gM\bigl( (K_j), B \bigr),
\]
contains this extension of~$\Sig(\rho)$ and is $\gH^1$-quasianalytic
relatively to~$K$.
\end{itemize}
\end{thm}


The $\gH^1$-quasianalyticity property means that any function~$g$ of the space
$\gM\bigl( (K_j), B \bigr)$ is determined by its restriction to any subset
of~$K$ which has positive linear Hausdorff measure;
in particular it is determined by its inner restriction $g_{|\D}$.
In the case of $g = \Sig(\rho)$ as in Theorem~\ref{thmmonog},
this yields a totally different way of recovering the outer function
$g_{|\E} = \Sig(\rho)^-$ from the inner function $\Sig(\rho)^+$.
In view of Theorem~\ref{thm_BWDuniqueRRL}, one may wonder whether it is true
that $g_{|\E}$ is the only \rrlc of $g_{|\D}$ for any $g\in\gM\bigl( (K_j), B
\bigr)$.


\begin{proof}[Proof of Theorem~\ref{thmmonog}]
For each $j\ge1$, define 
\[
K_j \defeq \bigg\{\, 
z \in \C \mid \text{for each $\la \in \S$,}\;
\abs{z-\la} \ge   \frac{1}{j} \norm{\rho(\la)}^{1/4} 
\, \bigg\} \cup\{\infty\}.
\]
Each~$K_j$ is a compact subset of the Riemann sphere, with $K_j
\subset K_{j+1}$, and the fact that~$\rho$ is bounded implies that the
complement of $K \defeq \bigcup_{j\ge1} K_j$ is contained
$\ov{\supp\rho}$ and thus in~$\S$; it has zero Haar measure because
$\rho \in \qlSB$.

Let us check that the space $\gM\big((K_j), B\big)$ is $\gH^{1}$-quasianalytic.
Denoting by $\Ga_j\IN \defeq \pa \big(K_j \cap \ov\D \big)$ and $\Ga_j\EX \defeq
\pa \big(K_j \cap \ov\E \big)$,
%
we observe that $\Ga_j\IN$ and~$\Ga_j\EX$ are rectifiable Jordan curves
and, for each~$j$ sufficiently large, 
the set $\Ga_j\IN \cap \Ga_j\EX = K_j \cap \S$ has positive
one-dimensional Hausdorff measure, since
\[
\abs{ \{\, \th \in [0, 2\pi] \mid \ee^{\I \th} \in \S\setminus K_j \,\} } 
\leq \sum_{\la \in \S} 4 \arcsin\bigg(\frac{ \norm{\rho(\la)}^{1/4} } { j} \bigg)
\leq 2\pi \sum_{\la \in \S} \frac{ \norm{\rho(\la)}^{1/4} } { j}.
%
%
%
\]
%
%
Thus, in the language of \cite[Definition~4]{MS2}, each pair
$(\Ga_j\IN, \Ga_j\EX)$ is a nested pair and $K_j =
K\big(\Ga_j\IN,\Ga_j\EX\big)$, so the space $\gM\big((K_j), B\big)$ of monogenic
functions is $\gH^{1}$-quasianalytic by \cite[Corollary~A]{MS2}.

Let us now check that this space contains~$\Sig(\rho)$.  
The normal convergence properties
\[ 
z \in K_j \ens\Rightarrow \ens
\left\{ \begin{aligned}
\sum_{\la \in \S} \frac{\norm{\rho(\la)}}{|z-\la|} 
\leq j \sum_{\la \in \S} \norm{\rho(\la)}^{3/4} < \infty, \\[1ex]
\sum_{\la \in \S} \frac{\norm{\rho(\la)}}{|z-\la|^2} 
\leq j^2 \sum_{\la \in \S} \norm{\rho(\la)}^{1/2} < \infty,
\end{aligned} \right.
\]
which hold for each~$j$,
show that the series $f(z) \defeq \sum_{\la\in\S}
\frac{\rho(\la)}{z-\la}$
and
$f^{(1)}(z) \defeq \sum_{\la\in \S} \frac{\rho(\la)}{(z-\la)^2}$
define continuous functions in~$K$.
Moreover, given $j\ge1$, we have for any $\eps>0$
\begin{align*}
\abs{f(z_2)-f(z_1) - f^{(1)}(z_1)(z_2-z_1)} &= 
\abs{ (z_1-z_2)^2 \sum_{\la \in \S} \frac{\rho(\la)}{(z_1-\la)^2(z_2-\la)}} \\[1ex]
&\le \abs{z_1-z_2}^2 j^3 \sum_{\la \in \S} \norm{\rho(\la)}^{1/4} \le \eps \abs{z_1-z_2}
\end{align*}
as soon as $z_1, z_2 \in K_j$ and $\abs{z_1-z_2}$ is sufficiently
small, hence~$f$ is $C^1$-holomorphic on~$K_j$.
We conclude that $f \in \gM\big((K_j), B\big)$.
Clearly, $\Sig(\rho)$ coincides with the restriction of~$f$ to
the set $\PP\setminus\ov{\supp\rho}$;
the closure of the latter set is~$\PP$ thus, by continuity, $f$ is unique.
\end{proof}


\begin{rem}
If we do not suppose $\rho \in \qlSB$ but only $\rho \in \dlSB$, then
the same construction as above is sufficient to get an extension
of~$\Sig(\rho)$ which belongs to $\gO(K_j,B)$ for each~$j$.
Observe that the spaces $\gO(K_j,B)$ too are $\gH^1$-quasianalytic by
virtue of \cite[Corollary~A]{MS2}.
\end{rem}


\begin{exa}
Let $G(w) = \sum_{m=1}^\infty G_m w^m$ be holomorphic for $\abs{w} <
r_0$ (with values in~$\C$).
A particular case of Theorem~\ref{thmmonog} occurs when solving the
``cohomological equation''
\begin{equation} \label{E:cohom}
f(qw)-f(w) = G(w). 
\end{equation}
One then gets a vector-valued PSP-series, which was studied under the
name ``Borel-Wolff-Denjoy series'' in \cite{MS1}.
Indeed, for any $r<r_0$, the functional equation \eqref{E:cohom}
has solution
\beglabel{eqfundsolncohom}
f_q(w) = \sum_{m=1}^\infty G_m \frac{w^m}{q^m - 1} 
\elabel
in the space $B = \B_r$ of all bounded
holomorphic functions of the disc $\{\, w\in\C \mid \abs{w}<r \,\}$.
Taking into account the dependence on the ``multiplier" $q$ thus defines a map 
$q \in \D \mapsto f_q(\cdot) \in \B_r$, which turns out to be a 
PSP-series with values in $\B_r$ (where the role of the variable~$z$ of
the present article is played by $q$). 
Indeed, one can rewrite \eqref{eqfundsolncohom} as 
\[
f_q = \sum_{\la \in \S} \frac{\rho(\la)}{q-\la}
\]
%
%
%
%
%
%
%
where the map $\rho \col \S \to \B_r$ has its support
contained in the set of all roots of unity and is defined by
\beglabel{eqdefrhocohom}
\rho(\la)(w) \defeq \la \sum_{k=1}^\infty G_{km} \frac{w^{km}}{km} 
\elabel
for $\la$ primitive root of unity of order~$m$ (see also \cite{MS1}).
An easy estimate shows that $\norm{\rho(\la)}$ decays at least
geometrically \wrt~$m$, thus $\rho \in \ell^{1/4}(\S,\B_r)$ and
Theorem~\ref{thmmonog} immediately entails
\begin{prop}
The function $q \in \D \mapsto f_q(\,\cdot\,) \in \B_r$ 
is an inner PSP-series which has an $\gH^1$-quasianalytic monogenic
continuation through the unit circle, given by the sum of the \rhs
of~\eqref{eqfundsolncohom} for $\abs{q}>1$.
\end{prop}


If~$G$ is not a polynomial, then the support of~$\rho$ is dense
in~$\S$
(because $G_m\neq0$ implies that $\rho(\la)\neq0$ for all~$\la$ such
that $\la^m=1$)
and the unit circle is thus a natural boundary for this function.
As a consequence of Corollary~\ref{corBWDrrl}, we also get
\begin{prop}
If $G$ is not a polynomial and $w_*\in\C$ satisfies $0 < \abs{w_*} < r_0$,
then the function $q \in \D \mapsto f_q(w_*) \in \C$ has a strong
natural boundary on the unit circle.
\end{prop}


\begin{proof}
  We set $G^*_n = G_n w_*^n / n$ for every $n\in\N^*$
  and suppose that the unit circle is not a strong natural boundary.
  Since $q \mapsto f_q(w_*) = \Sig\big( \rho(\,\cdot\,)(w_*) \big)^+(q)$ is a scalar inner PSP-series,
  Corollary~\ref{corBWDrrl} entails that the support of the function
  $\la \in \S \mapsto \rho(\la)(w_*)$
  is not dense in~$\S$. In view of~\eqref{eqdefrhocohom}, this says
  that there exists $m_0\in\N^*$ such that the complex numbers
\[
\rho^*_m \defeq \sum_{k\ge1} G^*_{km}, \qquad m\in \N^*,
\]
vanish for $m \ge m_0$.
But, by M\"obius inversion, $\dst G^*_n = \sum_{d\ge1}
\mu(d)\rho^*_{dn}$ for every $n\in\N^*$
(where $\mu(d)$ is the sum of all primitive roots of unity of order~$d$), 
hence $G_n = 0$ for $n\ge m_0$.
\end{proof}
\end{exa}









\section{Proof of Theorem~\ref{thm_BWDuniqueRRL}} \label{secproofmainthm}

Let~$B$ be a complex Banach space. Let $\rho\in \elSB$.
The Taylor expansion of~$\Sig(\rho)^+$ at the origin and the Taylor expansion of~$\Sig(\rho)^-$
at~$\infty$ are easily computed by expanding the geometric series
$\frac{1}{z-\la} = - \la\ii(1-\la\ii z)\ii
= z\ii(1-\la z\ii)\ii$ and permuting sums; one can write the result as
\begin{gather}
\label{eqexpandgh}
\Sig(\rho)^+(z) = \sum_{n\ge0} b_n z^n, \qquad 
\Sig(\rho)^-(z) = -\sum_{n<0} b_n z^n, \\[1ex]
\label{eqdefbnBWD}
b_n \defeq - \sum_{\la\in\S} \rho(\la) \la^{-n-1} \quad \text{for $n\in\Z$.}
\end{gather}


\subsection{The associated outer PSP-series is an \rrlc}	\label{secpfrrlce}


We prove in this section the first part of
Theorem~\ref{thm_BWDuniqueRRL}, namely that~$\Sig(\rho)^-$ is an \rrlc
of~$\Sig(\rho)^+$.
It is enough to find an unbounded integer sequence $(k_j)_{j\ge1}$
such that 
\beglabel{eqrrlbn}
\lim_{j\to\infty} b_{n+k_j}  = b_n \quad \text{for every $n\in\Z$}
\elabel
(indeed, from any such unbounded sequence, one can extract an increasing
sequence for which \eqref{eqrrlbn} still holds, showing that $\un b = (b_n)_{n\in\Z}$ is
a right limit of $(b_k)_{k\ge0}$, with $\Sig(\rho)^+=g_{\un b}^+$ and $\Sig(\rho)^-=g_{\un b}^-$).

We have
$b_{n+k_j}  = - \sum_{\la\in\S} \la^{-k_j} \cdot \rho(\la) \la^{-n-1}$
and $\norm{\la^{-k_j} \cdot \rho(\la) \la^{-n-1}} = \norm{\rho(\la)}$.
Thus, by the dominated convergence theorem, if the sequence $(k_j)_{j\ge1}$ satisfies
\beglabel{eqlakj}
\lim_{j\to\infty} \la^{k_j} = 1
\quad \text{for each $\la\in\supp\rho$}
\elabel
then \eqref{eqrrlbn} holds.
The problem thus reduces to finding an unbounded integer sequence $(k_j)_{j\ge1}$
satisfying~\eqref{eqlakj}. 

\medskip


\emph{\underline{1st case}}

\smallskip


Assume that $\supp\rho$ is contained in the set of all roots of unity.
Then one can take $k_j \defeq j!$,
since $\la^{j!}=1$ for any $j\ge$ order of~$\la$ as a root of unity.

\bigskip


\emph{\underline{2nd case}}

\smallskip


Assume that $\supp\rho$ is not contained in the set of all roots of unity.
We write $\supp\rho = \{\la_1,\la_2,\ldots\}$.
For each $j\ge1$, we set
\[
V_j \defeq \{\, \ee^{2\pi\I\om} \mid 0 \le \om < 1/j \,\} \subset \S
\]
and we consider the ``cells''
\[
W_{\ell_1,\ell_2,\ldots,\ell_j} \defeq 
\ee^{2\pi\I\ell_1/j} V_j \times \ee^{2\pi\I\ell_2/j} V_j 
\times \cdots \ee^{2\pi\I\ell_j/j} V_j 
\subset \S^j
\]
(where $\ee^{2\pi\I\ell_r/j} V_j$ is short-hand for 
$\{\, \ee^{2\pi\I\om} \mid {\ell_r}/{j} \le \om < {(\ell_r+1)}/{j} \,\}$)
for each integer $j$-tuple $(\ell_1,\ell_2,\ldots,\ell_j)$ with 
$0 \le \ell_1,\ell_2,\ldots,\ell_j \le j-1$;
these are $j^j$ cells which cover the torus~$\S^j$.
Now consider the $j^j+1$ points
\[
\La_{j,m} \defeq (\la_1^m,\la_2^m,\ldots,\la_j^m) \in \S^j,
\quad \text{for $m=0,1,\ldots,j^j$}.
\]
Out of them, at least two belong to the same cell, we thus can find $m_j < m_j'$
such that $\La_{j,m_j}$ and~$\La_{j,m_j'}$ belong to the same cell 
$W_{\ell_1,\ell_2,\ldots,\ell_j}$;
this means that
\[
\la_r^{m_j\ministrut}, \la_r^{m_j'} \in \ee^{2\pi\I\ell_r/j} V_j
\]
for all $r=1,2,\ldots,j$. This implies
\beglabel{eqdefkj}
\la_r^{k_j} \in V_j,
\quad \text{where $k_j \defeq m_j' - m_j$}
\elabel
for all $r\le j$.
Keeping $r$ fixed but arbitrary, we thus get $\lim_{j\to\infty}\la_r^{k_j} = 1$.
Therefore we have obtained~\eqref{eqlakj} with the sequence $(k_j)_{j\ge1}$
defined by~\eqref{eqdefkj}.
Now this sequence cannot be bounded because, if it were, \eqref{eqlakj} would
imply that each element of $\supp\rho$ is a root of unity.
This ends the first part of the proof of Theorem~\ref{thm_BWDuniqueRRL}.


\subsection{Uniqueness of the \rrlc for an inner PSP-series} \label{secpfunique}


We now prove the second part of Theorem~\ref{thm_BWDuniqueRRL}, namely
that there is no \rrlc for $\Sig(\rho)^+$ other than $\Sig(\rho)^-$.


\begin{prop}    \label{propScalarImplic}
Suppose that a strictly increasing sequence of positive integers $(k_j)_{j\ge1}$
satisfies
\beglabel{eqlimbnkjpos}
\lim_{j\to\infty}  b_{n+k_j} =  b_n \quad \text{for each $n\ge 0$,}
\elabel
then
\beglabel{eqlimlakj}
\lim_{j\to\infty} \la^{k_j} = 1 \quad \text{for each $\la \in \supp \rho$.}
\elabel
\end{prop}


\begin{proof}[Proof of Proposition~\ref{propScalarImplic}]
Let $(k_j)_{j\ge1}$ be as in the statement.
For each $j\ge1$ we define a map $\de_j \col \S \to B$ by
\[ 
\de_j(\la) := \rho(\la) \la^{-k_j} - \rho(\la)
\quad \text{for each $\la\in\S$.}
\]
We just need to prove that 
$\lim_{j\to\infty} \de_j(\la) = 0$ for each $\la\in\S$.
Notice that 
\beglabel{ineqmajdejla}
\norm{\de_j(\la)} \le 2 \norm{\rho(\la)},
\elabel
hence $\de_j\in\elSB$.
By \eqref{eqexpandgh}--\eqref{eqdefbnBWD}, we have
\[
\Sig(\de_j)^+(z) = \sum_{n\ge0} c_{j,n} z^n
\quad \text{for $z\in\D$}
\]
with $c_{j,n} = - \sum_{\la\in\S} \big(\rho(\la) \la^{-k_j}-\rho(\la)\big) \la^{-n-1} =
b_{n+k_j} - b_n$.
The assumption~\eqref{eqlimbnkjpos} implies $\lim_{j\to\infty} c_{j,n} = 0$ for each
$n\ge0$.
Since $\norm{c_{j,n} z^n} \le 2 \norm{\rho}_{\elSB} \abs{z}^n$,
the dominated convergence theorem yields
\beglabel{limCVsimple}
\lim_{j\to\infty} \Sig(\de_j)^+(z) = 0 
\quad \text{for each $z\in\D$.}
\elabel

Fix $\la^* \in \S$. By the same computation as in the proof of
Lemma~\ref{lemlimrad}, for each $j\ge1$ and $z\in [0,\la^*)$ we have
\begin{multline*}
\norm{ \de_j(\la^*) - (z-\la^*)\Sig(\de_j)^+(z) } =
\norm{ \sum_{\la\neq\la^*} \de_j(\la) \frac{z-\la^*}{z-\la} } \\[1ex]
\le
\eps^*(z) \defeq 2 \sum_{\la\neq\la^*} \norm{\rho(\la) \frac{z-\la^*}{z-\la} }
\end{multline*}
(using~\eqref{ineqmajdejla}).
By dominated convergence, 
\beglabel{eqlimepsst}
\lim\eps^*(z) = 0 \quad \text{as $z\to\la^*$ radially}
\elabel
(because $z\in[0,\la^*]$ $\Rightarrow$ $\abs{z-\la^*} < \abs{z-\la}$, 
thus $\norm*{\rho(\la) \frac{z-\la^*}{z-\la\,}} \le \norm{\rho(\la)}$).

Let $\epsilon>0$. For every $j\ge1$ and $z\in[0,\la^*)$, we have
\beglabel{ineqwritingdejlast}
\norm{\de_j(\la^*)} \le \norm{ (z-\la^*)\Sig(\de_j)^+(z) } + \eps^*(z).
\elabel
By~\eqref{eqlimepsst}, we can choose $z\in[0,\la^*)$ such that
$\eps^*(z) \le \frac{\epsilon}{2}$.
Using~\eqref{limCVsimple}, for that particular~$z$, we can find
$J\ge1$ such that for all $j\ge J$ also the first term in the \rhs
of~\eqref{ineqwritingdejlast} is $\le \frac{\epsilon}{2}$.
This shows that $\lim_{j\to\infty} \de_j(\la^*) = 0$ for every
$\la^*\in\S$, which was the desired conclusion.
\end{proof}


\begin{proof}[Proposition~\ref{propScalarImplic} allows to complete
  the proof of Theorem~\ref{thm_BWDuniqueRRL}]
Suppose that $h(z) = - \sum_{n<0} \be_n z^n$ is an \rrlc of~$\Sig(\rho)^+$.
We must show that~$h$ coincides with~$\Sig(\rho)^-$,
\ie that $\be_n = b_n$ for each $n<0$.

By hypothesis, there is a strictly increasing sequence of positive integers
$(k_j)_{j\ge1}$ such that
$\be_n = \lim_{j\to\infty} b_{n+k_j}$ for each $n<0$ and
$b_n = \lim_{j\to\infty} b_{n+k_j}$ for each $n\ge0$.
Proposition~\ref{propScalarImplic} entails 
\[
\lim_{j\to\infty}\la^{k_j} = 1
 \quad \text{for each $\la \in \supp\rho$,}
\]
therefore, for each $n<0$, 
$b_{n+k_j} = - \sum_{\la\in\S} \rho(\la) \la^{-n-1} \cdot \la^{-k_j}
\xrightarrow[j\to\infty]{} b_n$ by the dominated convergence theorem
(since $\norm{\rho(\la) \la^{-n-1} \cdot \la^{-k_j}} \le \norm{\rho(\la)}$),
whence $b_n=\be_n$.
\end{proof}


\vfill\eject


\noindent {\em Acknowledgements.}
%
%
We thank Gauthier Gidel and Florestan Martin-Baillon for considerably simplifying 
the proof of the second part of Theorem~\ref{thm_BWDuniqueRRL}
with respect to the first version of the paper.
%
%
We thank Viviane Baladi for interesting discussions and encouragement.
We acknowledge the support of the Centro di Ricerca Matematica Ennio de
Giorgi.


\vspace{1cm}



\vspace{1cm}

\noindent
David Sauzin\\[1ex]
CNRS UMI 3483 - Laboratorio Fibonacci \\
Centro di Ricerca Matematica Ennio De Giorgi, \\
Scuola Normale Superiore di Pisa \\
Piazza dei Cavalieri 3, 56126 Pisa, Italy\\
email:\,{\tt{david.sauzin@sns.it}}

\bigskip

\noindent
Giulio Tiozzo\\[1ex]
%
Yale University \\
10 Hillhouse Avenue \\
New Haven, 06511 CT, USA\\
email:\,{\tt{giulio.tiozzo@yale.edu}} \\


\end{document}